\newcommand{\spn}[1]{\mathrm{span}\left\{#1\right\}}
\newcommand{\supp}[1]{\mathrm{supp}\left(#1\right)}
\pgfplotsset{compat=1.16}
\def\namedlabel#1#2{\begingroup
	#2%
	\def\@currentlabel{#2}%
	\phantomsection\label{#1}\endgroup
}
\newtheoremstyle{droit}
{}
{}
{\upshape}
{}
{\bfseries}
{}
{ }
{}
\newtheoremstyle{italique}
{}
{}
{\itshape}
{}
{\bfseries}
{}
{ }
{}
\theoremstyle{italique}
\newtheorem{theorem}{Theorem}[section]
\newtheorem{lemma}[theorem]{Lemma}
\theoremstyle{droit}
\newtheorem{remark}[theorem]{Remark}
\newtheorem{definition}[theorem]{Definition}
\algnewcommand{\lst}{\texttt{lst}}
\algnewcommand{\slst}{\texttt{slst}}
\algnewcommand{\SEND}{\textbf{send}}
\newsavebox{\algleft}
\newsavebox{\algright}
\newcommand{\review}[1]{{\color{black}#1}}
\newcommand{\trim}{\text{cut}}
\newcommand{\untr}{\text{act}}
\newtheorem{mylemma}{Appendix}[section]
\definecolor{darkgreen}{rgb}{0,0.4,0} 
\definecolor{darkbrown}{rgb}{0.5, 0.396, 0.09}
\definecolor{c1}{rgb}{0.0, 0.4196078431372549, 0.6431372549019608}
\definecolor{c2}{rgb}{1.0, 0.5019607843137255, 0.054901960784313725}
\definecolor{c3}{rgb}{0.6705882352941176, 0.6705882352941176,
	0.6705882352941176} \definecolor{c}{rgb}{0.34901960784313724, 0.34901960784313724, 0.34901960784313724}
\definecolor{c4}{rgb}{0.37254901960784315, 0.6196078431372549,
	0.8196078431372549} \definecolor{c}{rgb}{0.7843137254901961, 0.3215686274509804, 0.0}
\definecolor{c5}{rgb}{0.5372549019607843, 0.5372549019607843,
	0.5372549019607843} \definecolor{c}{rgb}{0.6352941176470588, 0.7843137254901961, 0.9254901960784314}
\definecolor{c6}{rgb}{1.0, 0.7372549019607844, 0.4745098039215686}
\definecolor{c7}{rgb}{0.8117647058823529, 0.8117647058823529,
	0.8117647058823529}
\newcommand{\logLogSlopeReverseTriangle}[5]
{
	
	\pgfplotsextra
	{
		\pgfkeysgetvalue{/pgfplots/xmin}{\xmin}
		\pgfkeysgetvalue{/pgfplots/xmax}{\xmax}
		\pgfkeysgetvalue{/pgfplots/ymin}{\ymin}
		\pgfkeysgetvalue{/pgfplots/ymax}{\ymax}
		
		\pgfmathsetmacro{\xArel}{#1}
		\pgfmathsetmacro{\yArel}{#3}
		\pgfmathsetmacro{\xBrel}{#1+#2}
		\pgfmathsetmacro{\yBrel}{\yArel}
		\pgfmathsetmacro{\xCrel}{\xArel}
		
		\pgfmathsetmacro{\lnxB}{\xmin*(1-(#1-#2))+\xmax*(#1-#2)} 
		\pgfmathsetmacro{\lnxA}{\xmin*(1-#1)+\xmax*#1} 
		\pgfmathsetmacro{\lnyA}{\ymin*(1-#3)+\ymax*#3} 
		\pgfmathsetmacro{\lnyC}{\lnyA+#4*(\lnxA-\lnxB)}
		\pgfmathsetmacro{\yCrel}{\lnyC-\ymin)/(\ymax-\ymin)}
		
		\coordinate (A) at (rel axis cs:\xArel,\yArel);
		\coordinate (B) at (rel axis cs:\xBrel,\yBrel);
		\coordinate (C) at (rel axis cs:\xCrel,\yCrel);
		
		\draw[#5]   (A)-- node[pos=0.5,anchor=north] {\scriptsize 1}
		(B)-- 
		(C)-- node[pos=0.5,anchor=east] {\scriptsize #4}
		cycle;
	}
}
\begin{document}



\title{\textbf{An \textit{a posteriori} error estimator for isogeometric analysis on trimmed geometries}}


\author{{A. Buffa$^{1,2}$, O. Chanon$^1$, R. V\'azquez$^{1,2}$}\\ \\
	\footnotesize{$^1$ MNS, Institute of Mathematics, \'Ecole Polytechnique F\'ed\'erale de Lausanne, Switzerland}\\
	\footnotesize{$^2$ Istituto di Matematica Applicata e Tecnologie Informatiche `E. Magenes' (CNR), Pavia, Italy}
}
 \date{August 8, 2022}

\maketitle
\vspace{-0.8cm}
\noindent\rule{\linewidth}{0.4pt}
\thispagestyle{fancy}

\begin{abstract}
{Trimming consists of cutting away parts of a geometric domain, without reconstructing a global parametrization (meshing). It is a widely used operation in computer aided design, which generates meshes that are unfitted with the described physical object. This paper develops an adaptive mesh refinement strategy on trimmed geometries in the context of hierarchical B-spline based isogeometric analysis. A residual \textit{a posteriori} estimator of the energy norm of the numerical approximation error is derived, in the context of \review{the} Poisson equation. \review{The estimator is proven to be reliable, independently \review{of}} the number of hierarchical levels and \review{of} the way the trimmed boundaries cut the underlying mesh. Numerical experiments are performed to validate the presented theory, \review{and to show that the estimator's effectivity index is independent \review{of} the size of the active part of the trimmed mesh elements}.}
\end{abstract}

\noindent \textit{Keywords:} Trimming, \textit{a posteriori} error estimation, isogeometric analysis, hierarchical B-splines.

\section{Introduction} \label{s:intro}
Since its introduction in 2005 in the pioneering work \cite{igabasis}, isogeometric analysis (IGA) has been a very successful area of research including mathematical discoveries, advances in computational mechanics, and efforts to tackle problems in the interface between different research communities. The ultimate goal of IGA is to provide a framework unifying geometric design and the analysis of partial differential equations (PDEs) for computational engineering problems in a single workflow. To do so, the idea is to use the same building blocks employed in \review{computer aided design} (CAD) for both phases, namely smooth B-splines, non-uniform rational B-splines (NURBS) and variants thereof \cite{igabook}. 
While many steps are still missing to achieve such an ambitious goal, a large amount of research has already been done on IGA: solid mathematical foundations have been built \cite{igahref,igaresume} and excellent properties in a wide range of mathematical and engineering applications have been shown. We refer to the special issue \cite{igaappli} for a review of some of the most important works on the subject. 

In other words, IGA stands for the class of methods using spline discretization techniques over a computational domain described as a collection of spline parametrizations. In standard CAD softwares, such spline parametrizations, called patches, are often trimmed in order to be able to deal with more complex geometric models. This process generates meshes that are unfitted with the described physical object, since no global parametrization (re-meshing) is constructed to fit the trimmed object. To obtain a unified end-to-end methodology between geometric design and analysis, it is thus crucial to properly address the challenges coming from the treatment of trimmed models in the analysis \cite{reviewtrimmedissues,schillinger2012isogeometric,trimmingbletzinger}. Several results have succeeded to overcome some of the issues arising from the analysis on trimmed geometries, such as the need for a reparametrization of the cut elements for integration purposes \cite{nagy2015numerical,liu2015weighted,antolinvreps,adaptiveintegration,momentsDuster,antolinweibuffa}, or the need of stabilization techniques to recover the well-posedness of the differential problem and the accuracy of its numerical solution \cite{ghostpenalty,stabcutiga,puppistabilization}. However, much work remains to be done.

In particular, it is widely known that the tensor product structure of B-splines hinders the capability of efficiently capturing localized features of the PDE solution in small areas of interest in the computational domain. Many research papers, reviewed in \cite{reviewadaptiveiga}, have thus tackled the important challenge of constructing locally refined splines, and using them within an adaptive paradigm. Research has now reached an advanced maturity on the subject. However, the use of locally refined splines on complex geometries defined by trimming operations is still at its first steps, and the aim of the present paper is to go further in that direction. More precisely, we are interested in the study of an adaptive framework using hierarchical B-splines (HB-splines) \cite{forsey1988hierarchical, kraft1997adaptive} and their variant called truncated hierarchical B-splines (THB-splines) \cite{thbgiannelli, giannelli2016thb}, in the context of trimmed geometries. In \cite{trimmedshells}, using \cite[Section~4.5]{hollig}, it is shown that in the presence of trimming, (T)HB-splines yield a linearly independent basis suitable for the analysis. 

The use of THB-splines in the context of \review{the} Poisson problem and linear elasticity is studied in \cite{trimmedconditioning,immersedigaprecond}, where an emphasis is put on stability issues and bad conditioning of the system matrix caused by trimming, but no study of error indicators is given.
To provide an approximation of the error, an implicit error estimator \review{was} introduced in \cite{trimmedshells} in the context of (T)HB-spline isogeometric analysis on trimmed surfaces, extending their previous work \cite{coradello}, on error estimation for linear fourth-order elliptic partial differential equations on non-trimmed geometries. The estimator relies on the solution of an additional residual-like system, but its reliability is not demonstrated. Our contribution differs from the aforementioned works as we introduce an explicit residual \textit{a posteriori} estimator of the energy norm of the numerical error of \review{the} Poisson problem in trimmed geometries, and a mathematical proof of its reliability is given. Driven by the proposed estimator, and thanks to the local refinement capability given by hierarchical B-splines, we develop a fully adaptive error-driven numerical framework for \review{the} Poisson problem in trimmed geometries of arbitrary dimension. In this paper, very general geometries are considered since the only hypothesis required on the trimming boundary is to be Lipschitz. Moreover, the \review{reliability of the estimator is proven to be} independent \review{of} the way the trimmed boundaries cut the underlying mesh, and thus in particular, it is independent \review{of} the size of the active part of the trimmed elements. \review{Numerical examples are given to also show the efficiency of the proposed estimator.}

When dealing with geometric domains defined by trimming, the main challenge lies in the fact that the generated mesh is unfitted with the described physical domain. Therefore, our work is strongly related to the immersed or unfitted mesh methods. 
\review{In particular, an implicit \textit{a posteriori} estimator of the energy error due to numerical approximation is introduced in \cite{sunshillinger} for any polynomial degree, but its efficiency is only demonstrated numerically. In \cite{he2019residual}, a reliable and efficient residual-based \textit{a posteriori} error estimator is studied for a partially penalized linear immersed finite element method applied to elliptic interface problems. In the context of the cut finite element method, an estimator is proposed in \cite{burman2019posteriori} for an elliptic model problem with Dirichlet boundary conditions ensured by a ghost penalty stabilization \cite{ghostpenalty}. In that paper, to avoid the dependence on the location of domain-mesh intersection, the efficiency for the term of ghost penalty is shown globally. In both \cite{he2019residual} and \cite{burman2019posteriori}, a linear finite element basis is considered, while in the very recent work \cite{chen2021adaptive}, a reliable and efficient $hp$-residual type error estimator is given in the case of high-order unfitted finite element for interface problems in the framework of the local discontinuous Galerkin method. The reliable estimator we introduce in the present article for any polynomial degree is very similar to the one in \cite{chen2021adaptive} for two-dimensional domains, but our contribution deals with more general two- and three-dimensional computational domains. In particular, while the notion of ``large element'' is central in \cite{chen2021adaptive}, we do not require any particular assumption on the way the trimming boundary intersects the underlying mesh. Thus in the proposed estimator, the scaling of the residuals with respect to the size of the trimmed mesh elements needs to be adapted.

This} paper is structured as follows. After introducing some notation in Section \ref{sec:notation}, and reviewing the fundamental concepts of isogeometric analysis with hierarchical B-splines in Section \ref{sec:iga}, Section \ref{sec:modelpb} introduces the considered elliptic model problem that is solved in a trimmed geometry. Then in Section \ref{sec:estimator}, we introduce the \textit{a posteriori} estimator of the numerical error, in \review{the} energy norm, and its reliability is proven. In Section \ref{sec:adapt}, we briefly introduce the adaptive mesh refinement strategy in the context of trimmed geometries, before presenting several numerical experiments in Section \ref{sec:numexp}. Finally, Section \ref{sec:ccl} draws some conclusions, and mathematical results used throughout the paper are stated and proven in the Appendix. 

\section{Notation} \label{sec:notation}
We start by introducing the notation that will be used throughout the paper. Let $n=2$ or $n=3$, let $\omega$ be an open $k$-dimensional manifold in $\mathbb R^n$, $k\leq n$, and let $\varphi \subset \partial \omega$. We denote by $|\omega|$, $\overline{\omega}$ and int$(\omega)$, respectively, the measure of $\omega$, its closure and its interior. Moreover, we write diam$(\omega)$ \review{for} the diameter of $\omega$, that is, diam$(\omega):= \max_{\xi,\eta\in\omega} \rho(\xi,\eta)$, where $\rho(\xi,\eta)$ is the infimum of lengths of continuous piecewise $C^1$-paths between $\xi$ and $\eta$. 

Furthermore, for $1\leq p \leq \infty$, let $\|\cdot\|_{L^p(\omega)}$ be the norm in $L^p(\omega)$, and let $H^s(\omega)$ denote the Sobolev space of order $s\in \mathbb R$ whose classical norm and semi-norm are written as $\|\cdot\|_{s,\omega}$ and $|\cdot|_{s,\omega}$, respectively. We also write $L^2(\omega):= H^{0}(\omega)$, so that the norm in $L^2(\omega)$ will be written as $\|\cdot\|_{0,\omega}$. To deal with boundary conditions, for $z\in H^\frac{1}{2}(\varphi)$, we denote
\review{\begin{equation} \label{eq:H1spacewithtrace}
	H^1_{z,\varphi}(\omega) := \left\{ y\in H^1(\omega) : \text{tr}_\varphi(y) = z \right\},
	\end{equation}}
where tr$_\varphi(y)$ denotes the trace of $y$ on $\varphi \subset \partial \omega$. \\

In the remaining part of this article, the symbol $\lesssim$ indicates an inequality hiding a constant which does not depend on the mesh size $h$, on the size of the active part of the trimmed elements, nor on the number of hierarchical levels $L$. However, those inequalities may depend on the shape of the mesh elements. Moreover, we will write $A\simeq B$ whenever $A\lesssim B$ and $B\lesssim A$.

\section{Isogeometric analysis} \label{sec:iga}
In this section, we briefly review the notations and basic concepts related to B-splines and isogeometric analysis (IGA), following \cite{igaresume}. For a detailed review of the method and its applications, the reader is referred to \cite{igabook}. 

\subsection{Standard B-splines}\label{ss:sbs}
Let $p$, $N_\text{dof}$ $\in \mathbb{N}\setminus\{0\}$, where $p$ will denote the degree of the B-spline basis functions and $N_\text{dof}$ their number, also called number of {degrees of freedom}. Then, let $\Xi = \{0=\xi_1,\xi_2, \ldots, \xi_{N_\text{dof}+p+1}=1\}$  be a non-decreasing sequence of real values in the parametric space $(0,1)$, referred to as {knot vector}. 
\review{We call multiplicity the number of times a knot $\xi\in\Xi$ is repeated in the knot vector. As it is common practice in standard CAD, we assume in this work that knot vectors are always open, meaning that the first and last knots have multiplicity $p+1$. This leads to an interpolatory B-spline basis at both ends of the parametric interval. The univariate B-spline basis $\hat{\mathcal{B}}_p = \left\{\hat B_{i,p}:(0,1)\to\mathbb{R}\right\}_{i=1}^{N_\text{dof}}$ corresponding to $\Xi$ is then} defined recursively using \review{the} Cox-de Boor formula \cite{coxdeboor}. The obtained B-spline basis is $C^{p-k}$-continuous at every knot, where $k\in \mathbb{N}\setminus\{0\}$ is the multiplicity of the considered knot, and $C^\infty$-continuous elsewhere. 

Multivariate B-spline basis functions are defined in a straight-forward manner from tensor-products of univariate B-splines. That is, let $\mathbf p = \left(p_1, \ldots, p_{n}\right)$ be the vector of polynomial degrees in each of the $n\in \{1,2,3\}$ dimensions, and for all $j=1,\ldots,n$, let $\Xi^j := \{0=\xi_1^j,\xi_2^j,\ldots, \xi_{{N_\text{dof}}_j+p_j+1}^j=1\}$ be the knot vector corresponding to the parametric direction $j$. 
Then if $\hat B^j_{i,p_j}$ is the $i$-th basis function in the direction $j\in\{1,\ldots,n\}$, and if $\mathbf i = \left(i_1, \ldots, i_{{n}}\right)\in \mathbf I$ is a multi-index denoting a position in the tensor-product structure (using the previous notation, $\mathbf I = \{1,\ldots,N_\text{dof}\}$ if $n=1$), then the multivariate B-spline basis of degree $\mathbf p$ is defined by $$\hat{\mathcal{B}}_\mathbf p := \left\{\hat B_{\mathbf i,\mathbf p} : \mathbf i\in \mathbf I \text{ and } \hat B_{\mathbf i,\mathbf p} := \review{\prod_{j=1}^{n}} \hat B_{i_j, p_j}^j : (0,1)^{n}\to\mathbb{R} \right\}.$$

B-spline geometries (curves, surfaces, volumes) are then constructed as a linear combination of the basis functions $\hat B_{\mathbf i,\mathbf p}$ and some {control points} $\{\mathbf{P}_\mathbf i\}_{\mathbf i\in\mathbf I}\subset \mathbb{R}^n$ of the physical space. That is, if we let $\hat{\Omega}_0 := (0,1)^{n}$ be the parametric space, a B-spline geometry is defined as the image of a mapping $\mathbf F:\hat{\Omega}_0\to\mathbb R^n$ defined by $\mathbf{F}(\boldsymbol\xi) = \sum_{\mathbf i\in\mathbf I} \hat B_{\mathbf i,\mathbf p}(\boldsymbol\xi)\mathbf{P}_\mathbf i$ for all $\boldsymbol\xi \in \hat{\Omega}_0$.

For $j = 1,\ldots,n$, let $\tilde\Xi^j := \left\{\zeta_1^j, \ldots, \zeta^j_{M_j}\right\}$ be the set of non-repeated knots of $\Xi^j$, where $M_j\in \mathbb{N}\setminus\{0,1\}$ denote their number. Then the values of $\tilde\Xi^j$, also called {breakpoints}, form a Cartesian grid in the parametric domain $\hat{\Omega}_0$ called {parametric B\'ezier mesh} that we denote
$$\hat{\mathcal Q} := \left\{\hat K_{\mathbf m} := \displaystyle\bigtimes_{j=1}^{n} \left(\zeta_{m_j}^j, \zeta_{m_j+1}^j\right) : \mathbf m=\review{(}m_1,\ldots,m_{n}\review{)}, 1\leq m_j\leq M_j-1 \text{ for } j=1,\ldots,n\right\}.$$ 
Each $\hat K_\mathbf m:= \displaystyle\bigtimes_{j=1}^{n} \left(\zeta_{m_j}^j, \zeta_{m_j+1}^j\right)$ is called {parametric element} or cell, and we define its support extension to be
\begin{equation}\label{eq:suppext}
S_\text{ext}\left(\hat K_\mathbf m\right) := \bigtimes_{j=1}^{n}S_\text{ext}\left(\zeta_{m_j}^j, \zeta_{m_j+1}^j\right),
\end{equation}
where if $i$ is the index, $p_j+1\leq i \leq {N_\text{dof}}_j$, such that we can uniquely rewrite $\left(\zeta_{m_j}^j, \zeta_{m_j+1}^j\right) = \left(\xi_i^j,\xi^j_{i+1}\right)$, then
$$S_\text{ext}\left(\zeta_{m_j}^j, \zeta_{m_j+1}^j\right) := \review{\left(\xi_{i-p_j}^j, \xi_{i+p_j+1}^j\right)}.$$

Remark that all concepts introduced in this section can be readily transferred to non-uniform rational B-splines; the interested reader is referred to \cite{igabook,nurbsbook}.

\subsection{Hierarchical B-splines}\label{ss:hbs}
In this sub-section, we briefly introduce the concept of hierarchical B-splines (HB-splines), following \cite{thb2}. For more details, the reader is also referred to \cite{vuong}.

Let $\hat\Omega_0=(0,1)^{n}$ as before, let $L\in\mathbb N$, and let $\hat{\mathcal{B}}^0, \hat{\mathcal{B}}^1, \ldots, \hat{\mathcal{B}}^L$ be B-spline \review{bases} on $\hat \Omega_0$ such that 
\begin{equation} \label{eq:nestedBsp}
\spn{\hat{\mathcal{B}}^0}\subset \spn{\hat{\mathcal{B}}^1}\subset \ldots \subset \spn{\hat{\mathcal{B}}^L}.
\end{equation}
For all $\ell=0,\ldots,L$, let us denote $\hat{\mathcal Q}^\ell$ the mesh associated with $\hat{\mathcal B}^\ell$, where $\hat K\in \hat{\mathcal Q}^\ell$ represents a cell of level $\ell$. 
Then, let $\hat{\underline{\Omega}}_0^L:= \{\hat\Omega_0^0,\hat\Omega_0^1,\ldots,\hat\Omega_0^L\}$ be a hierarchy of sub-domains of $\hat\Omega_0$ of depth $L$, that is, such that
\begin{itemize}
	\item $\hat \Omega_0 =: \hat\Omega_0^0 \supseteq \hat\Omega_0^1 \supseteq \ldots \supseteq \hat\Omega_0^L := \emptyset$,
	\item each sub-domain $\hat \Omega_0^\ell$ is \review{a union of cells of level $\ell-1$, i.e. 
		$$\hat \Omega_0^\ell := \mathrm{int}\left( \bigcup_{\hat K \in \hat{\mathcal Q}^{\ell-1}_*} \overline{\hat K}\right), \quad \hat{\mathcal Q}^{\ell-1}_* \subset \hat{\mathcal Q}^{\ell-1}, \quad \forall \ell=1,\ldots,L.$$}
\end{itemize}
The HB-spline basis $\hat{\mathcal{H}}=\hat{\mathcal{H}}\left(\underline{\hat\Omega}_0^L\right)$ is then defined recursively in the following way: 
\begin{equation*}
\begin{cases}
\hat{\mathcal{H}}^0 := \hat{\mathcal{B}}^0; \\
\hat{\mathcal{H}}^{\ell+1} := \hat{\mathcal{H}}^{\ell+1}_A \cup \hat{\mathcal{H}}^{\ell+1}_B, \hspace{2mm} \ell = 0,\ldots,L-2, \\
\quad \text{with } \hat{\mathcal{H}}^{\ell+1}_A := \left\{\hat B\in \hat{\mathcal{H}}^\ell : \hspace{1mm} \supp{\hat B} \not\subseteq \hat\Omega_0^{\ell+1}\right\} \text{ and } \, \hat{\mathcal{H}}^{\ell+1}_B := \left\{\hat B\in\hat{\mathcal{B}}^{\ell+1} : \hspace{1mm} \supp{\hat B} \subseteq \hat\Omega_0^{\ell+1} \right\};\\
\hat{\mathcal{H}} := \hat{\mathcal{H}}^{L-1},
\end{cases}
\end{equation*}
\review{where the supports are considered to be open.}

Namely, HB-spline basis functions are the B-splines of each level $\ell$ whose support covers just elements of level $\ell^\star \geq \ell$ and at least one element of level $\ell$. Those basis functions, i.e. in $\hat{\mathcal H}\cap\hat{\mathcal B}^\ell$, are referred to as active basis functions of level $\ell$. This allows us to build a basis that is locally refined and we can therefore overcome the limitations intrinsic to the tensor-product nature of B-splines. 

In the same manner as B-spline curves, surfaces or volumes, hierarchical geometries are built as a linear combination of the HB-spline basis functions and control points of the physical space. 
Moreover, the {parametric hierarchical mesh} associated to the domain hierarchy $\hat{\underline{\Omega}}_0^L$ is defined as 
$$\hat{\mathcal Q} := \bigcup_{\ell=0}^L \hat{\mathcal Q}^\ell_A, \quad \text{ with } \quad \hat{\mathcal Q}^\ell_A := \left\{ \hat K\in\hat{\mathcal Q}^\ell : \hat K\subseteq \hat{\Omega}_0^\ell, \,\hat K\not\subseteq \hat\Omega_0^{\ell+1}\right\}.$$
We refer to $\hat K\in \hat{\mathcal Q}$ as an active cell, to $\hat K\in \hat{\mathcal Q}^\ell_A$ as an active cell of level $\ell$, \review{and we write lev$\Big(\hat K\Big)=\ell$}. Furthermore, if $\Omega_0$ is a HB-spline geometry determined by (the refinement of) a mapping $\mathbf F:\hat \Omega_0 \to \Omega_0$, we define the corresponding physical hierarchical mesh as
\begin{equation} \label{eq:mesh}
\mathcal Q(\Omega_0) := \left\{ K := \mathbf F\left(\hat K\right) : \hat K\in \hat{\mathcal Q} \right\}.
\end{equation}

To prevent the existence of singularities in the parametrization, it is standard to make the following assumption, \review{which} we assume to be satisfied in the \review{remainder} of this work:
\begin{enumerate}
	\item[(A0)] The mapping $\mathbf F : \hat{\Omega}_0 \to \Omega_0$ is bi-Lipschitz, $\mathbf F\vert_{\overline{\hat K}} \in C^\infty\left(\overline{\hat K}\right)$ for every $\hat K \in \hat{\mathcal Q}$, and $\mathbf F^{-1}\vert_{\overline{K}} \in C^\infty\left(\overline{K}\right)$ for every $K \in \mathcal Q(\Omega_0)$.\\
\end{enumerate}

Finally, we introduce the notion of $\mathcal T$-admissibility for a hierarchical mesh $\mathcal Q(\Omega_0)$, following \cite{reviewadaptiveiga}, and the element neighborhood in the hierarchical context, following \cite{bracco}. To do so, let us first extend the notion of support extension (\ref{eq:suppext}) by defining the multilevel support extension of an element $\hat K\in\hat{\mathcal Q}^\ell$ with respect to level $k$, with $0\leq k\leq \ell\leq L$, as follows:
$$S_\text{ext}\left(\hat K, k\right) := S_\text{ext}\left(\hat{K}'\right), \quad \text{with } \hat K'\in \hat{\mathcal Q}^k \text{ and } \hat{K} \subseteq \hat{K}'.
$$
Then, define the auxiliary domains $\hat \omega^0 := \hat \Omega_0^0 = \hat \Omega_0$, and for $\ell = 1,\ldots,L$, 
$$\hat \omega^\ell := \bigcup \left\{ \overline{\hat K} : \hat K\in \hat{\mathcal Q}^\ell, \,S_\text{ext}\left(\hat K, \ell\right)\subseteq \hat{\Omega}_0^\ell \right\}.$$
By the nested nature of the B-spline spaces in (\ref{eq:nestedBsp}), one can express $\hat{B}\in \hat{\mathcal B}^{\ell-1}$ in terms of B-spline basis functions of level $\ell$ as 
$$ \hat{B} = \sum_{\hat{B}_i^{\ell}\in \hat{\mathcal B}^\ell} c_i \hat{B}_i^\ell.$$
If we define the truncation of $\hat B$ with respect to $\ell$ as 
$$ \text{trunc}^\ell\hat{B} := \sum_{\hat{B}_i^{\ell}\in \hat{\mathcal B}^\ell\setminus \hat{\mathcal H}_B^\ell} c_i \hat{B}_i^\ell,$$
then the domains $\hat \omega^\ell $ represent the regions of $\hat \Omega_0^\ell$ where all the active basis functions of level $\ell-1$ truncated with respect to level $\ell$ \review{are} equal to zero. 

We are now able to introduce the following definitions:
\begin{definition} \label{def:admissibility}
	\begin{itemize}
		\item A parametric mesh $\hat{\mathcal Q}$ is $\mathcal T$-admissible of class $\mu\in\{\review{2},\ldots,L-1\}$ if 
		$$\forall\ell = \mu, \mu+1, \ldots, L-1, \quad \hat \Omega_0^\ell \subseteq \hat \omega^{\ell-\mu+1}.$$
		From \cite[Proposition~9]{buffagiannelli1}, this \review{implies} that the truncated hierarchical B-spline (THB-spline) basis functions in 
		\begin{equation} \label{eq:thbsplinebasis}
		\hat{\mathcal T} := \left\{ \text{trunc}^{\review{L}}\left( \ldots \left(\text{trunc}^{\ell+1}\hat{B}\right)\cdots\right) : \hat{B}\in \hat{\mathcal{B}}^{\ell}\cap\hat{\mathcal H}, \,\ell = 0,\ldots,L-1 \right\}
		\end{equation}
		which take nonzero values over any element $\hat K\in \hat{\mathcal Q}$ belong to at most $\mu$ successive levels.
		\item A physical mesh $\mathcal Q(\Omega_0)$ defined by (\ref{eq:mesh}) is $\mathcal T$-admissible of class $\mu\in\{\review{2},\ldots,L-1\}$ if the underlying parametric mesh $\hat{\mathcal Q}$ is $\mathcal T$-admissible of class $\mu$. 
		\item For all $\hat K\in \hat{\mathcal Q}\cap\hat{\mathcal Q}_A^\ell$, the neighborhood of $\hat K$ with respect to $\mu\in\{\review{2},\ldots,L\review{-1}\}$ is the set
		$$\mathcal N\left(\hat{\mathcal Q}, \hat K, \mu\right) := \left\{ \hat K'\in \hat{\mathcal Q}_A^{\ell-\mu+1} : \hat K'\review{\cap} S_\text{ext}\left(\hat K,\ell-\mu+2\right)\review{\neq \emptyset} \right\}$$
		if $\ell-\mu+1\geq 0$, and $\mathcal N\left(\hat{\mathcal Q}, \hat K, \mu\right) := \emptyset$ otherwise, that is, if $\ell-\mu+1<0$. 
		\item The neighborhood of a physical element $K\in \mathcal Q(\Omega_0)$ is the set of the push-forward elements of the neighborhood of the pull-back $\mathbf F^{-1}(K)$ of $K$, that is, 
		$$\mathcal N\big(\mathcal Q(\Omega_0), K,\mu\big) := \left\{ \mathbf F\left(\hat K'\right) : \hat K'\in \mathcal N\left(\hat{\mathcal Q}, \mathbf F^{-1}(K), \mu\right) \right\}.$$
	\end{itemize}
\end{definition}

\subsection{Isogeometric paradigm}
To solve a partial differential equation (PDE), the idea behind isogeometric analysis is to use the same basis functions both to describe the computational domain and to define the finite dimensional space on which the Galerkin solution of the PDE is sought. In this paper, we concentrate on HB-spline based IGA. That is, if we let $\mathbf{F}$ be (a refinement of) the parametrization map of the computational domain $\Omega_0$ generated by the HB-spline basis $\hat{\mathcal{H}}$, then the numerical solution is sought in the finite dimensional space spanned by
$$
\mathcal{H}(\Omega_0) = \left\{B:=\hat B\circ \mathbf{F}^{-1} :  \hat B\in\hat{\mathcal{H}}\right\}.
$$

 \section{Trimming model problem} \label{sec:modelpb}
 Let $n\in\left\{2,3\right\}$ and let $\Omega_0\subset \mathbb{R}^n$ be a hierarchical geometry as defined in Section \ref{sec:iga} defined from a hierarchy $\underline{\hat{\Omega}}_0^L$ with $L\in\mathbb N$. That is, $\Omega_0$ is the image of a bijective bi-Lipschitz map $\mathbf F : \hat{\Omega}_0 \rightarrow \Omega_0$ generated by an HB-spline basis $\hat{\mathcal H}$, where $\hat\Omega_0 = (0,1)^{n}$ is the parametric domain. Moreover, let $\left\{D_i\right\}_{i=1}^s$ be a set of bounded open domains in $\mathbb R^n$ that are trimmed from $\Omega_0$ in order to obtain the computational domain $\Omega$, i.e.,
$$\Omega := \Omega_0 \setminus \overline{D}, \quad \text{ with } \quad D:= \text{int}\left(\bigcup_{i=1}^s \overline{D_i}\right),$$
as illustrated in Figure \ref{fig:domains}.
Assume that $\Omega$ is an open Lipschitz domain, let $\mathbf n$ be the unitary outward normal of $\partial \Omega$, and let $\Gamma_N,\Gamma_D \subset \partial \Omega$ be open such that $\Gamma_D \cap \Gamma_N = \emptyset$, $\overline{\Gamma_D} \review{\cup} \overline{\Gamma_N} = \partial \Omega$ and $\Gamma_D\neq \emptyset$. Note that since $\Omega$ is Lipschitz, \review{the} trimming boundary $\gamma := \partial \Omega \setminus \overline{\partial \Omega_0}$ is also Lipschitz.

\begin{figure}
	\centering
	\begin{subfigure}{0.4\textwidth}
		\begin{center}
			\begin{tikzpicture}[scale=2.5]
			\draw[thick] (2,0) -- (3,0) ;
			\draw[c2,thick] (2,0) -- (2,1) ;
			\draw[c2,thick] (3,0) -- (3,1) ;
			\draw[c2,thick] (2,1) -- (3,1) ;
			\draw (2.5,0.5) node{$\Omega_0$} ;
			\draw (2.5,0) node[below]{$\Gamma_D$} ; 
			\draw[c2] (2.9,1) node[above]{$\Gamma_N^0$} ; 
			\end{tikzpicture}
			\caption{Non-trimmed domain $\Omega_0$.}
			\label{fig:Omega0}
		\end{center}
	\end{subfigure}
	~ 
	\begin{subfigure}{0.4\textwidth}
		\begin{center}
			\begin{tikzpicture}[scale=2.5]
			\draw[thick] (2,0) -- (3,0) ;
			\draw[c2,thick] (2,0) -- (2,1) ;
			\draw[c2,thick] (3,0) -- (3,1) ;
			\draw[c2,thick] (2,1) -- (3,1) ;
			\draw[c1,thick] (2.2, 1) arc (-180:0:.3cm);
			\fill[pattern=north west lines, pattern color=c1, opacity=0.5] (2,0) -- (2,1) -- (2.2,1) arc (-180:0:.3cm) -- (3,1) -- (3,0) -- (2,0);
			\draw[c1] (2.5,0.25) node{$\Omega$} ;
			\fill[pattern=north east lines, pattern color=c2, opacity=0.5] (2.2, 1) arc (-180:0:.3cm) -- (2.2,1);
			\draw[c2] (2.5,0.85) node{$D$} ;
			\draw (2.5,0) node[below]{$\Gamma_D$} ; 
			\draw[c2] (2.9,1) node[above]{$\Gamma_N^0$} ; 
			\draw[c1] (2.25,0.7) node{$\gamma$} ;
			\end{tikzpicture}
			\caption{Trimmed domain $\Omega$ and $D:=\Omega_0\setminus\overline\Omega$.}
			\label{fig:OmegaD}
		\end{center}
	\end{subfigure}
	\caption{Illustration of the notation used on a trimmed geometry.}
	\label{fig:domains}
\end{figure}
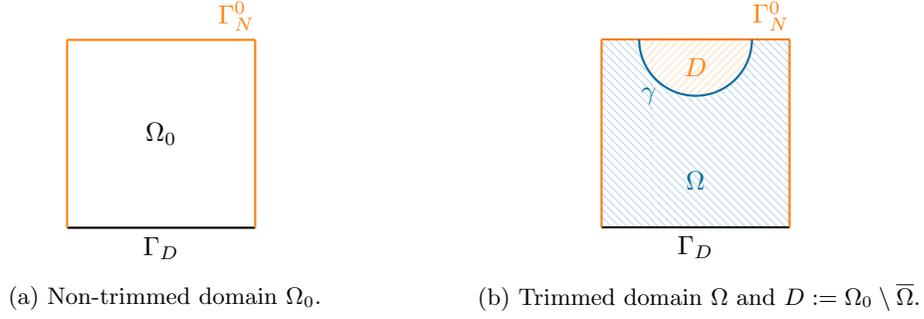

\subsection{Continuous formulation}
Let \review{$g_D\in H^{\frac{1}{2}}(\Gamma_D)$, $g_N\in L^2(\Gamma_N)$} and $f\in L^2\left(\Omega\right)$, and consider the following Poisson equation in $\Omega$: find $u\in H^1(\Omega)$, the weak solution of 
\begin{align} \label{eq:originalpb}
\begin{cases}
-\Delta u = f &\text{ in } \Omega \\
u = g_D &\text{ on } \Gamma_D \\
\displaystyle\frac{\partial u}{\partial \mathbf{n}} = g_N &\text{ on } \Gamma_N,\vspace{0.1cm}
\end{cases}
\end{align}
that is, \review{following the notation in~(\ref{eq:H1spacewithtrace}),} $u\in H^1_{g_D,\Gamma_D}(\Omega)$ satisfies for all $v\in H^1_{0,\Gamma_D}(\Omega)$, 
\begin{equation} \label{eq:weakpb}
\int_\Omega \nabla u \cdot \nabla v \,\mathrm dx = \int_\Omega fv \,\mathrm dx + \int_{\Gamma_N} \review{g_N} v \,\mathrm ds.
\end{equation}

To simplify the subsequent analysis, we suppose that $\overline{\Gamma_D} \cap \overline \gamma = \emptyset$, that is, we suppose that the Dirichlet boundary is not part of the trimming boundary $\gamma$. If it were not the case, then in the discrete setting, we would need to weakly impose the Dirichlet boundary conditions on $\overline{\Gamma_D} \cap \overline \gamma$, and make use of stabilization techniques \cite{ghostpenalty,stabcutiga,puppistabilization}. \review{According} to Lax-Milgram theorem, problem (\ref{eq:weakpb}) admits a unique solution $u\in H^1_{g_D,\Gamma_D}(\Omega)$. 

\subsection{HB-spline based IGA formulation} \label{sec:HBIGAformulation}
\review{Let $\mathcal Q := \mathcal Q\left(\Omega_0\right)$ be (a refinement of) the hierarchical physical mesh on $\Omega_0$ as defined in~(\ref{eq:mesh}), and let $\mathcal{F}$ be the set of all faces of $\mathcal{Q}$. Note that here and in the sequel, edges are called faces when $n=2$, and assume that the Dirichlet boundary $\Gamma_D$ is the union of full element faces. Moreover, let $h_K:=\text{diam}(K)$ for all $K\in \mathcal Q$, let $h_F := \text{diam}(F)$ for all $F\in\mathcal F$, and let $h:=\displaystyle\max_{K\in \mathcal Q} h_K$.}

Furthermore, let $\mathcal Q_\Omega := \mathcal Q_\trim \cup \mathcal Q_\untr$ be the active mesh intersecting $\Omega$, where
$$\mathcal Q_\trim := \left\{K\in \mathcal Q : K\cap \Omega \neq K, |K\cap\Omega| > 0 \right\} $$
is the set of cut (trimmed) elements, and
$$\mathcal Q_\untr := \left\{K\in \mathcal Q : K\subset \Omega\right\}$$
is the set composed of the other active (non-trimmed) elements in $\Omega$. 
Similarly, \review{let $\mathcal F_N := \mathcal F_\trim \cup \mathcal F_\untr$}, where
$$\mathcal F_\trim := \left\{ F\in \mathcal F : F\cap\Gamma_N\neq F \right\} \qquad \text{and} \qquad \mathcal F_\untr := \left\{ F \in \mathcal F : F\subset \Gamma_N \right\}.$$
\review{Note that $\mathcal{F}_N$ is therefore the set of faces~$F$ of $\mathcal{F}$ such that $|F\cap \Gamma_N|>0$. }
Recall that the trimming boundary is defined as $\gamma := \partial \Omega \setminus \overline{\partial \Omega_0} \subset \Gamma_N$, and for all $K\in\mathcal Q_\Omega$, let $$\gamma_K := \gamma\cap \text{int}(K).$$
Note that for all $K\in\mathcal Q_\untr$, $\gamma_K = \emptyset$. Also note that for all internal faces $F\in \mathcal F_N$ such that $\left(F\cap\Gamma_N\right) \subset \gamma$ and $\left(F\cap\Gamma_N\right)\subset \partial \left(K\cap\Omega\right)$ for some $K\in\review{\mathcal Q_\Omega}$, then $\gamma_K \cap \left(F\cap\Gamma_N\right) = \emptyset$, that is, this internal face is not included in $\gamma_K$. Or in other words, 
$$\review{\Gamma_N = \mathrm{int}\left[ \left(\bigcup_{F\in\mathcal F_N}\overline{F\cap\Gamma_N}\right) \cup \left(\bigcup_{K\in \mathcal Q_\trim} \overline{\gamma_K}\right)\right],}$$
where the intersection of any two elements of the union is empty, and each contribution of the union only appears once. \\

Now, let us make the following assumptions on $\mathcal Q$: 
\begin{enumerate}
	\item[(A1)] $\mathcal Q$ is shape regular, that is, for all $K\in \mathcal Q$, $\displaystyle\frac{h_K}{\rho_K} \lesssim 1$, where $\rho_K$ denotes the radius of the largest inscribed ball in $K$. As a consequence, $h_K\simeq h_F$ for all $K\in \mathcal Q$ and all \review{$F\in \mathcal F$ with $F\subset \partial K$}. 
	\item[(A2)] $\mathcal Q$ is $\mathcal T$-admissible of some fixed class $\mu\in \mathbb{N}$, \review{$\mu\geq 2$}, following Definition \ref{def:admissibility}.
\end{enumerate}

\begin{remark}
	Note that no further assumption on the trimming boundary $\gamma$ is required, other \review{than} $\gamma$ to be Lipschitz. In particular, and as already pointed out in \cite{guzman}, the literature on unfitted finite elements commonly imposes an additional restriction on how $\gamma$ intersects the mesh $\mathcal Q$. But we do not need here this assumption which, in $2$D, requires that $\gamma$ does not intersect an edge of the mesh $\mathcal Q$ more than once, and which is analogous in $3$D, see, e.g., \cite{hansbo2}. So for example, in this paper, $\gamma_K$ could be disconnected.
\end{remark}

Following the isogeometric paradigm introduced in Section~\ref{sec:iga} and generalizing it to trimmed geometries, let us define
\begin{align}
\mathcal{H} &:= \left\{B:=\hat B\circ \mathbf{F}^{-1} : \hat B\in\hat{\mathcal{H}}\right\},\nonumber \\
\mathcal{H}_\Omega &:= \left\{B\in \mathcal{H} : \supp{B}\cap \overline\Omega \neq \emptyset\right\}, \label{eq:HOmegabasis}
\end{align}
and let us also define the following approximation spaces: 
\begin{align*}
&V_h(\Omega_0) := \spn{\mathcal H}, \quad
V_h(\Omega) := \spn{B\vert_\Omega: B\in\mathcal H_\Omega}, \\
&\review{V_h^0(\Omega_0):= \spn{B\in\mathcal H : B\vert_{\Gamma_D} = 0} = \left\{ v_h \in V_h(\Omega_0) : v_h\vert_{\Gamma_D} = 0 \right\},} \\
&V_h^0(\Omega) := \spn{B\vert_\Omega: B\in\mathcal H_\Omega, B\vert_{\Gamma_D} = 0} = \left\{ v_h \in V_h(\Omega) : v_h\vert_{\Gamma_D} = 0 \right\}.
\end{align*} 
\review{Note that a proof of the linear independence of the trimmed basis $\mathcal{H}_\Omega$ in $\Omega$ can be found in \cite[Section~4.5]{hollig}.} 

\review{In the following, we assume that $g_D$ is the trace of a discrete function in $V_h(\Omega)$, that we still write $g_D$ by abuse of notation.} Then, the Galerkin method with finite basis $\mathcal H_\Omega$ is used to discretize weak problem (\ref{eq:weakpb}) and reads as follows: find $u_h\in g_D+V_h^{0}(\Omega)$ such that for all $v_h\in V_h^0(\Omega)$, 
\begin{equation} \label{eq:discrpb}
\int_\Omega \nabla u_h \cdot \nabla v_h \,\mathrm dx = \int_\Omega fv_h \,\mathrm dx + \int_{\Gamma_N} \review{g_N} v_h \,\mathrm ds.
\end{equation}
In the following, we are interested in the \textit{a posteriori} estimation of the energy error $\|\nabla (u-u_h)\|_{0,\Omega}$ on the trimmed geometry $\Omega$. 

\review{\begin{remark}
		The analysis is performed on this simple model Poisson problem for simplicity, but it can be readily extended to the estimation of the energy error for a general steady elliptic diffusion-advection-reaction problem or to a linear elasticity problem, as soon as they verify the assumptions of the Lax-Milgram theorem. 
\end{remark}}

\section{An \textit{a posteriori} error estimator on trimmed geometries} \label{sec:estimator}
In this section, we derive an \textit{a posteriori} estimator of the energy error in the trimmed geometry $\Omega$ between the exact solution $u$ and the discrete solution $u_h$, and we prove its reliability. 

In the subsequent analysis, we assume for simplicity that the discrete functions are $C^1$-continuous. This assumption is not needed, but it allows us to simplify the analysis while \review{underlining} the specificity of IGA with higher order B-splines. The general case \review{of $C^0$-continuous basis functions} could be treated in a similar way \review{through the introduction of appropriate jump terms}, following the classical theory of \review{the} adaptive finite element method. \review{However,} the addition of face jumps does not add any relevant additional insight to the analysis.

So more precisely, let 
\begin{align} \label{eq:deltas}
\delta_K := \begin{cases}
h_K & \text{ if } K\in\mathcal Q_\untr \\
c_{K\cap\Omega} |K\cap\Omega|^\frac{1}{n} & \text{ if } K\in\mathcal Q_\trim, 
\end{cases}
\quad \text{ and } \quad
\delta_F := \begin{cases}
h_F^\frac{1}{2} & \text{ if } \review{F}\in\mathcal F_\untr \\
c_{F\cap\Gamma_N} |F\cap\Gamma_N|^\frac{1}{2(n-1)} & \text{ if } F\in\mathcal F_\trim,
\end{cases}
\end{align}
where, if we define $\eta\in\mathbb{R}$, \review{$\eta>0$}, as the unique solution of $\eta = -\log(\eta)$, \review{then}
\begin{align} \label{eq:constant}
c_{S} := \begin{cases}
\max\big(\hspace{-0.08cm}-\log\left(|S|\right), \eta \big)^\frac{1}{2} & \text{ if } n = 2 \\
1 & \text{ if } n = 3,
\end{cases}
\end{align}
for $S=K\cap\Omega$ for some $K\in\mathcal Q_\trim$ or $S=F\cap\Gamma_N$ for some $F\in\mathcal F_\trim$. Then in Theorem \ref{thm:upperbound}, we \review{will} show that 
\begin{align} \label{eq:estimator}
\mathscr E(u_h) &:= \left[ \sum_{K\in\mathcal Q_\Omega} \delta_K^2 \left\| f+\Delta u_h\right\|_{0,K\cap\Omega}^2 + \sum_{F\in\mathcal F_N} \delta_F^2 \left\|\review{g_N}-\frac{\partial u_h}{\partial \mathbf n}\right\|^2_{0,F\cap\Gamma_N} + \sum_{K\in \mathcal Q_\trim} h_K \left\|\review{g_N}-\frac{\partial u_h}{\partial \mathbf n}\right\|^2_{0,\gamma_K} \right]^\frac{1}{2}
\end{align}
is an upper bound of the energy error. \\

Before stating and proving this theorem, let us recall the following well-known scaled trace inequality, see, e.g., \cite[Section~1.4.3]{dipietroern}:
\begin{equation} \label{eq:trace}
\|v\|^2_{0,\partial K} \lesssim h_K^{-1} \|v\|^2_{0,K} + h_K \left\|\nabla v\right\|^2_{0,K}, \quad \forall v \in H^1(K), \forall K\in \mathcal Q.
\end{equation}
Let us also recall the following local trace inequality proven in \cite{guzman} under the very weak assumption that $\Omega$ is Lipschitz (a proof under stronger assumptions can also be found in \cite{hansbo2} and in \cite{hansbo2bis}): there exists a fixed $h_0>0$ such that if for all $K\in \mathcal Q$ such that \review{$\gamma_K\neq \emptyset$}, $h_K<h_0$, then 
\begin{equation} \label{eq:tracegamma}
\|v\|^2_{0,\gamma_K} \lesssim h_K^{-1} \|v\|^2_{0,K} + h_K \left\|\nabla v\right\|^2_{0,K}, \quad \forall v \in H^1(K),
\end{equation}
where the hidden constant is independent \review{of} $v$, $K$, $h$ and \review{of} how $\gamma$ intersects $K$.

Furthermore, since the hierarchical mesh $\mathcal Q$ is $\mathcal T$-admissible, it is possible to build a Scott-Zhang-type operator $I_h : H_{0,\Gamma_D}^1(\Omega_0) \to V_h^0(\Omega_0)$ on $\Omega_0$ such that for all $v\in H_{0,\Gamma_D}^1(\Omega_0)$, 
\begin{align}
&\sum_{K\in\mathcal Q} h_K^{-2} \|v-I_h(v)\|^2_{0,K} \lesssim \|\nabla v\|^2_{0,\Omega_0} \quad 
\text{ and }\quad \sum_{K\in\mathcal Q} \|\nabla I_h(v)\|^2_{0,K} \lesssim \|\nabla v\|^2_{0,\Omega_0}.\label{eq:L2H1scottzhang}
\end{align}
Note that the latter implies that for all $v\in H_{0,\Gamma_D}^1(\Omega_0)$, 
\begin{align}
\sum_{K\in\mathcal Q} \left\|\nabla \big( v-I_h(v) \big)\right\|^2_{0,K} \lesssim \|\nabla v\|^2_{0,\Omega_0}, \label{eq:H1H1scottzhang}
\end{align}
and that all the quantities considered here refer to the non-trimmed geometry $\Omega_0$. 
\review{The construction of such operator can be found in \cite{corrigendum} and \cite[Section~6.1.3]{reviewadaptiveiga} in the case in which $\Gamma_D = \partial \Omega$, but it readily generalizes to the case in which $\Gamma_D$ is a union of full faces.}\\

Finally, we will need \review{the} two following lemmas to take care of the trimmed elements and faces.
\begin{lemma} \label{lemma:savareinterior}
	Let $K\in \mathcal Q_\trim$, and let $K_\Omega := K\cap \Omega\neq \emptyset$. Then for all $v\in H^1(K)$, 
	$$\| v \|_{0,K_\Omega} \lesssim c_{K_\Omega} \left|K_\Omega\right|^\frac{1}{n}\left( h_K^{-2} \|v\|^2_{0,K} + \|\nabla v\|^2_{0, K}\right)^\frac{1}{2} = \delta_K\left( h_K^{-2} \|v\|^2_{0,K} + \|\nabla v\|^2_{0, K}\right)^\frac{1}{2},$$
	where $c_{K_\Omega}$ is defined as in (\ref{eq:constant}). The hidden constant is independent \review{of} the measures of $K_\Omega$ and of $K$. 
\end{lemma}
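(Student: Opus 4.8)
The plan is to treat the dimensions $n=3$ and $n=2$ separately, with the same two–step strategy in both: first use a H\"older inequality to pass from the integral over the possibly very irregular set $K_\Omega$ to one over the well-shaped element $K$, and then apply a Sobolev embedding on $K$ with the correct dependence on $h_K$, obtained by pulling back to a fixed reference cube through an affine map and invoking assumptions (A0)--(A1) so that the Jacobians contribute only powers of $h_K$. Throughout I write $\Lambda := \bigl(h_K^{-2}\|v\|_{0,K}^2+\|\nabla v\|_{0,K}^2\bigr)^{1/2}$, which up to a fixed power of $h_K$ equals the $H^1$-norm of the pulled-back function on the reference element, so that the claim reads $\|v\|_{0,K_\Omega}\lesssim\delta_K\Lambda$.

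For $n=3$: H\"older's inequality with exponents $3$ and $3/2$ gives $\|v\|_{0,K_\Omega}\le|K_\Omega|^{1/3}\|v\|_{L^6(K_\Omega)}\le|K_\Omega|^{1/3}\|v\|_{L^6(K)}$. Since $K$ is a bi-Lipschitz image of the reference cube, the critical embedding $H^1(K)\hookrightarrow L^6(K)$ holds on $K$, and a scaling argument shows $\|v\|_{L^6(K)}\lesssim\Lambda$, the powers of $h_K$ coming from the volume and gradient Jacobians cancelling exactly. As $c_{K_\Omega}=1$ and $\delta_K=|K_\Omega|^{1/3}$ when $n=3$, this is the assertion.

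For $n=2$: now $H^1(K)\hookrightarrow L^{2q}(K)$ holds for every finite $q$ but with a constant that degenerates as $q\to\infty$. The quantitative ingredient, which I would isolate as an auxiliary lemma, is a Gagliardo--Nirenberg / Brezis--Gallouet type bound which, after the same reference-element scaling, reads $\|v\|_{L^{2q}(K)}\lesssim\sqrt{2q}\,\Lambda$ for all $q\ge1$, uniformly in $K$ (the residual factor $h_K^{1/q}$ is harmless since $h_K\le\mathrm{diam}(\Omega_0)$). Combined with H\"older (exponents $q$ and $q'$), this gives $\|v\|_{0,K_\Omega}\le|K_\Omega|^{1/(2q')}\|v\|_{L^{2q}(K)}\lesssim|K_\Omega|^{1/(2q')}\sqrt{2q}\,\Lambda$ for every $q>1$, and it remains to optimize $q$ in terms of $|K_\Omega|$. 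When $-\log|K_\Omega|\ge\eta$ I choose $q=1-\log|K_\Omega|$, i.e. $q'=1+1/(-\log|K_\Omega|)$; then $|K_\Omega|^{1/(2q')}=|K_\Omega|^{1/2}\exp\!\bigl(\frac{-\log|K_\Omega|}{2(1-\log|K_\Omega|)}\bigr)\le e^{1/2}|K_\Omega|^{1/2}$ because that exponent is $<\frac12$, while $\sqrt{2q}\lesssim(-\log|K_\Omega|)^{1/2}=c_{K_\Omega}$ since $-\log|K_\Omega|\ge\eta$ is bounded below. When $-\log|K_\Omega|<\eta$, which by the defining identity $e^{-\eta}=\eta$ means $|K_\Omega|>\eta$, I instead keep $q=2$ fixed, so $\|v\|_{0,K_\Omega}\lesssim|K_\Omega|^{1/4}\Lambda$ and $|K_\Omega|^{1/4}\le\eta^{-1/4}|K_\Omega|^{1/2}=\eta^{-3/4}c_{K_\Omega}|K_\Omega|^{1/2}$. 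In both subcases $\|v\|_{0,K_\Omega}\lesssim c_{K_\Omega}|K_\Omega|^{1/2}\Lambda=\delta_K\Lambda$ with an absolute constant, as claimed.

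The $n=3$ case is essentially routine once the reference-element scaling is written down. The real work is in $n=2$: establishing the $\sqrt{2q}$-growth of the Sobolev constant with the right $h_K$-scaling, and then carrying out the optimization in $q$ so that it reproduces exactly the logarithmic weight $c_{K_\Omega}$ and so that the two regimes match continuously at the threshold $|K_\Omega|=\eta$ --- which is precisely why $\eta$ is defined through $\eta=-\log\eta$.
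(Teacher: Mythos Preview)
Your proof is correct and follows essentially the same route as the paper's: H\"older to pass from $K_\Omega$ to $K$, pull back to a reference cube via (A0)--(A1), use the critical Sobolev embedding for $n=3$, and for $n=2$ use the $\sqrt{q}$-growth of the embedding constant (the paper obtains this via a Hausdorff--Young argument in its Appendix) together with an optimization in the H\"older exponent. The only cosmetic differences are that the paper chooses $p=c_{K_\Omega}^2=\max(-\log|K_\Omega|,\eta)$ directly rather than your $q=1-\log|K_\Omega|$ versus $q=2$ split, and it tracks the residual factor $h_K^{2/p}$ via $|K|^{1/p}\le|K|^{c_K^{-2}}\lesssim 1$ instead of your appeal to $h_K\le\mathrm{diam}(\Omega_0)$.
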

\begin{proof}
	Let $v\in H^1(K)$ and let \review{$\mathbf F_K := \mathbf F \circ \mathbf G_K : \hat K \to K$ where $\hat K :=(0,2\pi)^n$,} $\mathbf F$ is the isogeometric map, and $\mathbf G_K$ is a linear mapping. So the Jacobian matrix of $\mathbf G_K$ written $D\mathbf G_K$ is diagonal, and thanks to the shape regularity assumption (A1), each of its components $\left(D\mathbf G_K\right)_{ii}\lesssim h_K$, $i=1,\ldots,n$. Thanks to assumption (A0) on $\mathbf F$, if $D\mathbf F$ denotes the Jacobian matrix of $\mathbf F$, then $\left|\det\left(D\mathbf F\right)\right| \simeq 1$, 
	$\|D\mathbf F\|_{L^\infty\left(\mathbf F^{-1}\left(K\right)\right)} \lesssim 1$, and $\|D\mathbf F^{-1}\|_{L^\infty(K)} \lesssim 1$.
	So if we let $\hat v := v\circ \mathbf F_K$, then by \review{the} H\"older inequality, for all $p\geq 1$, 
	\begin{equation} \label{eq:averageerrorint}
	\|v \|^2_{0,K_\Omega} \leq \left|K_\Omega\right|^{1-\frac{1}{p}} \|v\|^2_{L^{2p}\left(K_\Omega\right)} \leq \left|K_\Omega\right|^{1-\frac{1}{p}} \|v\|^2_{L^{2p}(K)} \lesssim \left|K_\Omega\right|^{1-\frac{1}{p}} h_K^\frac{n}{p} \,\|\hat v\|^2_{L^{2p}\left(\hat K\right)}.
	\end{equation}
	By Sobolev embedding, it is well known that $H^1\left(\hat K\right)$ can be continuously embedded in $L^{2p}\left(\hat K\right)$ for every $1\leq p<\infty$ if $n=2$, or for every $1\leq p\leq 3$ if $n=3$.
	So, if $n=3$, by taking $p=3$ in (\ref{eq:averageerrorint}) and by Sobolev embedding, 
	\begin{align*} 
	\|v \|_{0,K_\Omega}^2 \lesssim |K_\Omega|^{\frac{2}{3}} h_K \left\|\hat v\right\|^2_{1,\hat K} &\lesssim |K_\Omega|^{\frac{2}{3}} h_K \left( h_K^{-3}\left\|v\right\|^2_{0, K} + h_K^{-1} \|\nabla v\|^2_{0, K} \right) \\ &= c_{K_\Omega}^2\left|K_\Omega\right|^\frac{2}{n} \left( h_K^{-2}\left\|v\right\|^2_{0, K} + \|\nabla v\|^2_{0, K} \right).
	\end{align*}
	Let us now consider the case $n=2$. From Appendix \ref{lemma:LqH1},
	$\|\hat v\|_{L^{2p}\left(\hat K\right)} \leq c \sqrt{p} \|\hat v\|_{1,\hat K}$, where $c$ is a constant independent \review{of} $p$ and \review{of} the measure of $K$. So by taking $p=\max\big(\hspace{-0.08cm}-\log\left(\left|K_\Omega\right|\right), \eta \big) = c_{K_\Omega}^2$ in (\ref{eq:averageerrorint}), then $\left|K_\Omega\right|^{-\frac{1}{p}} \lesssim 1$ and $h_K^\frac{2}{p} \simeq |K|^\frac{1}{p} = \left|K\right|^{\displaystyle c_{K_\Omega}^{-2}} \leq \left|K\right|^{\displaystyle c_{K}^{-2}} \lesssim 1$, and thus
	\begin{align*} 
	\| v \|_{0,K_\Omega}^2 &\lesssim p \left|K_\Omega\right|^{1-\frac{1}{p}} h_K^{\frac{2}{p}} \left\|\hat v\right\|_{1,\hat K}^2 \lesssim c_{K_\Omega}^2 \left|K_\Omega\right| \left( h_K^{-2}\|v\|^2_{0,K} + \|\nabla v\|^2_{0, K} \right) = c_{K_\Omega}^2\left|K_\Omega\right|^\frac{2}{n} \left( h_K^{-2}\|v\|^2_{0,K} + \|\nabla v\|^2_{0, K} \right). 
	\end{align*}
\end{proof}

\begin{lemma} \label{lemma:savareboundary}
	Let $F\in \mathcal F_\trim$, and let $F_N := F\cap \Gamma_N\neq \emptyset$. Then for all $v\in H^\frac{1}{2}(F)$,
	$$\| v \|_{0,F_N} \lesssim c_{F_N} \left|F_N\right|^\frac{1}{2(n-1)}\left( h_F^{-1} \|v\|^2_{0,F} + |v|^2_{\frac{1}{2}, F}\right)^\frac{1}{2} = \delta_F\left( h_F^{-1} \|v\|^2_{0,F} + |v|^2_{\frac{1}{2}, F}\right)^\frac{1}{2},$$
	where $c_{F_N}$ is defined as in (\ref{eq:constant}). The hidden constant is independent \review{of} the measures of $F_N$ and of $F$. 
\end{lemma}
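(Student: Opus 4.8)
The plan is to mimic the proof of Lemma~\ref{lemma:savareinterior}, replacing the volumetric $H^1$-Sobolev embedding by the corresponding fractional embedding on the $(n-1)$-dimensional face $F$. First I would fix a reference face $\hat F := (0,2\pi)^{n-1}$ and a map $\mathbf F_F := \mathbf F\vert_F \circ \mathbf G_F : \hat F \to F$, where $\mathbf G_F$ is a linear map onto the parametric face $\mathbf F^{-1}(F)$ whose diagonal Jacobian has entries $\lesssim h_F$ by the shape-regularity assumption (A1), and $\mathbf F\vert_F$ is the restriction of the isogeometric map to that parametric face. By assumption (A0) the surface Jacobian of $\mathbf F\vert_F$ and of its inverse are bounded above and below by constants; tracking this through the change of variables and through the scaling of the Gagliardo double integral gives, for $\hat v := v\circ \mathbf F_F \in H^{\frac12}(\hat F)$, the relations $\|v\|_{0,F}^2 \simeq h_F^{\,n-1}\|\hat v\|_{0,\hat F}^2$ and $|v|_{\frac12,F}^2 \simeq h_F^{\,n-2}|\hat v|_{\frac12,\hat F}^2$.

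Next, exactly as in~(\ref{eq:averageerrorint}), I would apply H\"older's inequality on $F_N\subseteq F$ with exponent $p\geq 1$ and transport the $L^{2p}$-norm to the reference face, so that
\begin{equation*}
\|v\|_{0,F_N}^2 \leq |F_N|^{1-\frac1p}\,\|v\|_{L^{2p}(F)}^2 \lesssim |F_N|^{1-\frac1p}\, h_F^{\frac{n-1}{p}}\,\|\hat v\|_{L^{2p}(\hat F)}^2 .
\end{equation*}
The crucial ingredient is then the embedding of $H^{\frac12}(\hat F)$ into $L^{2p}(\hat F)$. When $n=3$, $\hat F$ is two-dimensional and $H^{\frac12}(\hat F)$ embeds continuously into $L^4(\hat F)$; choosing $p=2$, so that the H\"older factor becomes $|F_N|^{1-1/p}=|F_N|^{1/(n-1)}$, and combining with the scaling relations above yields the statement with $c_{F_N}=1$. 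When $n=2$, $\hat F$ is an interval and $H^{\frac12}$ is the critical Sobolev space, which embeds into $L^{2p}(\hat F)$ for every $p<\infty$ with a constant growing like $\sqrt p$ (the one-dimensional fractional analogue of Appendix~\ref{lemma:LqH1}); choosing $p=\max\!\big(\!-\!\log|F_N|,\eta\big)=c_{F_N}^2$ as in the proof of Lemma~\ref{lemma:savareinterior}, the factors $|F_N|^{-1/p}$ and $h_F^{1/p}\simeq|F|^{1/p}$ are bounded by constants, and the surviving powers of $h_F$ reduce exactly to $h_F^{-1}$ in front of $\|v\|_{0,F}^2$ and to $1$ in front of $|v|_{\frac12,F}^2$, producing the logarithmic constant $c_{F_N}$ and closing the estimate.

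I expect the main obstacle to be the sharp $\sqrt p$-growth of the embedding constant of $H^{\frac12}\hookrightarrow L^{2p}$ on the one-dimensional reference interval: this is the fractional counterpart of the Ladyzhenskaya-type / Trudinger--Moser estimate already used in dimension two for the interior lemma, and I would isolate it as a separate result in the Appendix (alongside Appendix~\ref{lemma:LqH1}), either by interpolation or by a direct Gagliardo--Nirenberg argument for $H^{1/2}$. A second point requiring care is the scaling of the fractional seminorm under the anisotropic (but shape-regular) map $\mathbf G_F$ and the smooth chart $\mathbf F\vert_F$: one must keep the full reference norm $\|\hat v\|_{\frac12,\hat F}^2=\|\hat v\|_{0,\hat F}^2+|\hat v|_{\frac12,\hat F}^2$ and scale its two summands separately, using that (A0) distorts the Gagliardo kernel only by bounded factors and that $F$, being the image under $\mathbf F$ of a face of a parametric cell, is itself a smooth piece no matter how $\gamma$ cuts it. Everything else is a verbatim repetition of the computations in Lemma~\ref{lemma:savareinterior} with the dimension $n$ replaced by $n-1$.
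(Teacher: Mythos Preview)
Your proposal is correct and follows essentially the same route as the paper's proof: H\"older on $F_N$, transport to the reference face $\hat F=(0,2\pi)^{n-1}$, the critical embedding $H^{1/2}(\hat F)\hookrightarrow L^{2p}(\hat F)$ with $p=2$ when $n=3$ and with $p=c_{F_N}^2$ and $\sqrt p$-growth of the constant when $n=2$, then scale back. The one-dimensional $\sqrt p$-embedding you plan to isolate in the Appendix is precisely the paper's Appendix~\ref{lemma:savare}, which it invokes at exactly the spot you anticipate.
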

\begin{proof} Note that this proof generalizes the one in \cite[Lemma~A.2]{paper1defeaturing}, and it follows the same ideas as in the proof of Lemma \ref{lemma:savareinterior}.
	Let $v\in H^\frac{1}{2}(F)$ and let \review{$\mathbf F_F := \mathbf F \circ \mathbf G_F : \hat F \to F$} where $\hat F := (0,2\pi)^{n-1}$, $\mathbf F$ is the isogeometric map, and $\mathbf G_F$ is a linear mapping. So the Jacobian matrix of $\mathbf G_F$ written $D\mathbf G_F$ is diagonal, and thanks to the shape regularity assumption (A1), each of its components $\left(D\mathbf G_F\right)_{ii}\lesssim h_F$, $i=1,\ldots,n-1$. Thanks to assumption (A0) on $\mathbf F$, if $D\mathbf F$ denotes the Jacobian matrix of $\mathbf F$, then $\left|\det\left(D\mathbf F\right)\right| \simeq 1$, 
	$\|D\mathbf F\|_{L^\infty\left(\mathbf F^{-1}\left(\review{F}\right)\right)} \lesssim 1$, and $\|D\mathbf F^{-1}\|_{L^\infty(\review{F})} \lesssim 1$.
	So if we let $\hat v := v\circ \mathbf F_F$, then by \review{the} H\"older inequality, for all $p\geq 1$, 
	\begin{equation} \label{eq:averageerror}
	\|v \|^2_{0,F_N} \leq \left|F_N\right|^{1-\frac{1}{p}} \|v\|^2_{L^{2p}\left(F_N\right)} \leq \left|F_N\right|^{1-\frac{1}{p}} \|v\|^2_{L^{2p}(F)} \lesssim \left|F_N\right|^{1-\frac{1}{p}} h_F^\frac{n-1}{p} \,\|\hat v\|^2_{L^{2p}\left(\hat F\right)}.
	\end{equation}
	By Sobolev embedding, it is well known that $H^\frac{1}{2}\left(\hat F\right)$ can be continuously embedded in $L^{2p}\left(\hat F\right)$ for every $1\leq p<\infty$ if $n=2$, or for every $1\leq p\leq 2$ if $n=3$.
	So, if $n=3$, by taking $p=2$ in (\ref{eq:averageerror}) and by Sobolev embedding, 
	\begin{equation*} 
	\|v \|_{0,F_N}^2 \lesssim |F_N|^{\frac{1}{2}} h_F \left\|\hat v\right\|^2_{\frac{1}{2},\hat F} \lesssim |F_N|^{\frac{1}{2}} h_F \left( h_F^{-3}\left\|v\right\|^2_{0, F} + h_F^{-1} |v|^2_{\frac{1}{2},F} \right) = c_{F_N}^2\left|F_N\right|^\frac{1}{n-1} \left( h_F^{-2}\left\|v\right\|^2_{0, F} + |v|^2_{\frac{1}{2},F} \right).
	\end{equation*}
	Let us now consider the case $n=2$. From Appendix \ref{lemma:savare},
	\review{$\|\hat v\|_{L^{2p}\left(\hat F\right)} \leq \review{c} \sqrt{p} \|\hat v\|_{\frac{1}{2},\hat F}$, where $c$ is a constant independent {of} $p$ and {of} the measure of $F$.} 
	So by taking $p=\max\big(\hspace{-0.08cm}-\log\left(\left|F_N\right|\right), \eta \big) = c_{F_N}^2$ in (\ref{eq:averageerror}), then $\left|F_N\right|^{-\frac{1}{p}} \lesssim 1$ and $h_F^\frac{1}{p} = \left|F\right|^{c_{F_N}^{-2}} \leq \left|F\right|^{c_{F}^{-2}} \lesssim 1$, and thus
	\begin{align*} 
	\| v \|_{0,F_N}^2 &\lesssim p \left|F_N\right|^{1-\frac{1}{p}} h_F^{\frac{1}{p}} \left\|\hat v\right\|_{\frac{1}{2},\hat F}^2 \lesssim c_{F_N}^2 \left|F_N\right| \left( h_F^{-2}\|v\|^2_{0,F} + |v|^2_{\frac{1}{2},F} \right) = c_{F_N}^2\left|F_N\right|^\frac{1}{n-1} \left( h_F^{-2}\|v\|^2_{0,F} + |v|^2_{\frac{1}{2},F} \right). 
	\end{align*}
\end{proof}

Let us now state and prove the main theorem. 
\begin{theorem} \label{thm:upperbound}
	Let $u$ be the exact solution of problem (\ref{eq:weakpb}) in the trimmed geometry $\Omega$, let $u_h$ be its discretized counterpart that solves problem (\ref{eq:discrpb}), and assume that the mesh $\mathcal Q$ satisfies the assumptions of shape regularity (A1) and $\mathcal T$-admissibility (A2) with $h<h_0$ for some \review{fixed} $h_0\review{>0}$. Then the numerical error, in \review{the} energy norm, is bounded in terms of the estimator $\mathscr{E}(u_h)$ introduced in (\ref{eq:estimator}) as follows:
	$$\|\nabla (u-u_h)\|_{0,\Omega} \lesssim \mathscr{E}(u_h).$$
\end{theorem}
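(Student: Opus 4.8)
The plan is to run the classical residual-based \emph{a posteriori} argument on the untrimmed background mesh, using Lemmas~\ref{lemma:savareinterior} and~\ref{lemma:savareboundary} to absorb the trimmed elements and faces. Write $e := u - u_h$; since $u$ and $u_h$ share the trace $g_D$ on $\Gamma_D$, we have $e \in H^1_{0,\Gamma_D}(\Omega)$. Starting from $\|\nabla e\|_{0,\Omega}^2 = \int_\Omega \nabla(u-u_h)\cdot\nabla e\,\mathrm dx$, inserting the weak form~(\ref{eq:weakpb}) with test function $e$, and using Galerkin orthogonality~(\ref{eq:discrpb}) against an arbitrary $v_h\in V_h^0(\Omega)$, one obtains
$$\|\nabla e\|_{0,\Omega}^2 = \int_\Omega f(e - v_h)\,\mathrm dx + \int_{\Gamma_N} g_N(e - v_h)\,\mathrm ds - \int_\Omega \nabla u_h \cdot \nabla(e - v_h)\,\mathrm dx.$$
I would then integrate the last term by parts element by element over $\mathcal Q_\Omega$ (legitimate by (A0), as $u_h$ is smooth on each $\overline K$). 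Since the discrete functions are $C^1$ the contributions on faces internal to $\Omega$ cancel; on the Dirichlet faces $e-v_h$ vanishes; and using the partition of $\Gamma_N$ into the parts $F\cap\Gamma_N$ ($F\in\mathcal F_N$) and $\gamma_K$ ($K\in\mathcal Q_\trim$) one arrives at
$$\|\nabla e\|_{0,\Omega}^2 = \sum_{K\in\mathcal Q_\Omega}\int_{K\cap\Omega}(f+\Delta u_h)(e-v_h)\,\mathrm dx + \sum_{F\in\mathcal F_N}\int_{F\cap\Gamma_N}\Bigl(g_N-\tfrac{\partial u_h}{\partial\mathbf n}\Bigr)(e-v_h)\,\mathrm ds + \sum_{K\in\mathcal Q_\trim}\int_{\gamma_K}\Bigl(g_N-\tfrac{\partial u_h}{\partial\mathbf n}\Bigr)(e-v_h)\,\mathrm ds.$$

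The delicate point is the choice of $v_h$: the quasi-interpolant $I_h$ lives on $\Omega_0$, not on the trimmed domain. So I would first extend $e$. Since $\Omega$ is Lipschitz, there is a bounded extension $\tilde e\in H^1(\Omega_0)$ of $e$ with $\|\tilde e\|_{1,\Omega_0}\lesssim\|e\|_{1,\Omega}$; because $\overline{\Gamma_D}\cap\overline\gamma=\emptyset$, the domains $\Omega$ and $\Omega_0$ coincide near $\Gamma_D$, hence $\tilde e|_{\Gamma_D}=0$, i.e. $\tilde e\in H^1_{0,\Gamma_D}(\Omega_0)$. Set $v_h := I_h(\tilde e)|_\Omega$, which lies in $V_h^0(\Omega)$. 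On $\Omega$ one has $e-v_h = \tilde e - I_h\tilde e$, so all interpolation errors below are those of the Scott–Zhang operator on $\Omega_0$, controlled by~(\ref{eq:L2H1scottzhang})–(\ref{eq:H1H1scottzhang}); and by Poincaré–Friedrichs ($e$ vanishes on $\Gamma_D\neq\emptyset$), $\|\nabla\tilde e\|_{0,\Omega_0}\lesssim\|e\|_{1,\Omega}\lesssim\|\nabla e\|_{0,\Omega}$.

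It then remains to bound the three sums by $\mathscr E(u_h)\,\|\nabla e\|_{0,\Omega}$. Apply Cauchy–Schwarz in each summand: the "residual" factors are precisely those weighted by $\delta_K$, $\delta_F$, $h_K^{1/2}$ in $\mathscr E(u_h)$, so it suffices to control the "interpolation" factors. For untrimmed $K$ one uses~(\ref{eq:L2H1scottzhang}) directly, and for untrimmed faces $F\subset\Gamma_N$ the scaled trace inequality~(\ref{eq:trace}). For trimmed $K\in\mathcal Q_\trim$, Lemma~\ref{lemma:savareinterior} applied to $\tilde e-I_h\tilde e$ produces the factor $\delta_K\bigl(h_K^{-2}\|\cdot\|_{0,K}^2+\|\nabla\cdot\|_{0,K}^2\bigr)^{1/2}$; for trimmed faces $F\in\mathcal F_\trim$, Lemma~\ref{lemma:savareboundary} gives $\delta_F\bigl(h_F^{-1}\|\cdot\|_{0,F}^2+|\cdot|_{\frac12,F}^2\bigr)^{1/2}$, after which a scaling argument on the element $K\supset F$ bounds $|\tilde e-I_h\tilde e|_{\frac12,F}^2 \lesssim h_K^{-2}\|\tilde e-I_h\tilde e\|_{0,K}^2+\|\nabla(\tilde e-I_h\tilde e)\|_{0,K}^2$; and for the $\gamma_K$ terms the Lipschitz trace inequality~(\ref{eq:tracegamma}) (applicable since $h<h_0$) yields $\|\tilde e-I_h\tilde e\|_{0,\gamma_K}^2 \lesssim h_K^{-1}\|\tilde e-I_h\tilde e\|_{0,K}^2+h_K\|\nabla(\tilde e-I_h\tilde e)\|_{0,K}^2$, matching the weight $h_K^{1/2}$. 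In every case, after squaring and summing over $K$ (using (A1) and (A2) so each cell is counted a bounded number of times), the interpolation factor is $\lesssim\|\nabla\tilde e\|_{0,\Omega_0}^2\lesssim\|\nabla e\|_{0,\Omega}^2$. Summing the three bounds and dividing by $\|\nabla e\|_{0,\Omega}$ gives the theorem.

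\textbf{Main obstacle.} The integration by parts and the Cauchy–Schwarz/Scott–Zhang bookkeeping are routine. The genuine difficulties are already packaged into the two preparatory lemmas — which is where the measure-dependent weights $\delta_K,\delta_F$ and the logarithmic correction $c_S$ in $2$D (from the borderline Sobolev embedding) enter — and into the extension step, which is what makes the bound blind to \emph{how} $\gamma$ cuts the mesh. The point one must be most careful about is the bookkeeping on $\partial(K\cap\Omega)$ for trimmed $K$: correctly separating the genuine cut boundary $\gamma_K$ from the mesh faces lying on $\Gamma_N$ (including internal faces that coincide with $\gamma$), verifying that the internal-face cancellation from $C^1$-continuity still applies, and checking that the per-element interpolation quantities reassemble into $\|\nabla\tilde e\|_{0,\Omega_0}^2$ with constants independent of $L$ and of the cut.
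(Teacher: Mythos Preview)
Your proposal is correct and follows essentially the same argument as the paper. The only substantive variation is in the extension step: the paper extends $e$ to $\Omega_0$ by solving an auxiliary Laplace problem in $D=\Omega_0\setminus\overline\Omega$ with Dirichlet data $e$ on $\gamma$, whereas you invoke a generic Stein-type extension; both give $\|\tilde e\|_{1,\Omega_0}\lesssim\|e\|_{1,\Omega}$ with constant independent of $h$, and your observation that $\overline{\Gamma_D}\cap\overline\gamma=\emptyset$ forces $\tilde e|_{\Gamma_D}=0$ is correct. For the $|\cdot|_{1/2,F}$ term on trimmed faces the paper instead sums the (unscaled) trace bound $|\,\cdot\,|_{1/2,F}\lesssim\|\cdot\|_{1,K_F}$ globally and then applies Poincar\'e on $\Omega_0$, but your scaled local estimate $|w|_{1/2,F}^2\lesssim h_K^{-2}\|w\|_{0,K}^2+\|\nabla w\|_{0,K}^2$ is the right scaling and plugs directly into~(\ref{eq:L2H1scottzhang})--(\ref{eq:H1H1scottzhang}), which is arguably cleaner.
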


\begin{proof}
	Let $e:= u-u_h$. Then for all $v\in H_{0,\Gamma_D}^1(\Omega)$, from (\ref{eq:weakpb}), 
	\begin{equation} \label{eq:errorint1}
	\int_\Omega \nabla e \cdot \nabla v \,\mathrm dx = \sum_{K\in\mathcal Q_\Omega} \left[\int_{K\cap\Omega} (f+\Delta u_h) v \,\mathrm dx + \int_{\Gamma_N\cap\overline{K}} \left(\review{g_N}-\frac{\partial u_h}{\partial\mathbf{n}}\right) v \,\mathrm ds\right].
	\end{equation}
	Let us define
	\begin{equation} \label{eq:notation}
	\begin{cases}
	r_K:= f+\Delta u_h\in L^2(K\cap\Omega), &\forall K\in \mathcal{Q}_\Omega, \\
	j_F := \review{g_N}-\displaystyle\frac{\partial u_h}{\partial \mathbf n} \in L^2(F\cap\Gamma_N), & \forall F\in\mathcal F_N, \vspace{0.1cm}\\
	j_K:= \review{g_N}-\displaystyle\frac{\partial u_h}{\partial \mathbf n} \in L^2(\gamma_K), & \forall K\in\mathcal Q_\trim.
	\end{cases}\end{equation}
	So with the particular choice $v=e$, we can rewrite (\ref{eq:errorint1}) as
	\begin{align} \label{eq:errorint}
	\|\nabla e\|^2_{0,\Omega} = &\sum_{K\in\mathcal Q_\Omega} \int_{K\cap\Omega} r_K e \,\mathrm dx + \sum_{F\in\mathcal F_N} \int_{F\cap\Gamma_N} j_F\,e\,\mathrm ds + \sum_{K\in\mathcal Q_\trim} \int_{\gamma_K} j_K e \,\mathrm ds.
	\end{align}
	
	Moreover, recall that $D := \Omega_0\setminus\overline\Omega$ (see Figure \ref{fig:domains}). Then, let ${\mathbf n}_D$ be the outward unit normal to $\partial D$, and let $e_0\in H^1_{0,\Gamma_D}(\Omega_0)$ be the extension of $e$ to $\Omega_0$ such that $e_0\vert_{\Omega} = e$, and $e_0\vert_{D}\in H^1_{e,\gamma}(D)$ is the weak solution of
	$$\int_D \nabla e_0 \cdot \nabla w \,\mathrm dx = 0, \qquad \forall w \in H_{0,\gamma}^1(D).$$
	Since the measures of both $D$ and $\gamma$ are independent \review{of} $h$, then by continuity of the elliptic solution on the data and by the trace inequality, 
	\begin{equation*} 
	\|e_0\|_{1,D} \lesssim \|e\|_{\frac{1}{2},\gamma} \lesssim \|e\|_{1,\Omega}, 
	\end{equation*}
	and thus by \review{the} Poincar\'e inequality, 
	\begin{equation}\label{eq:equive0e}
	\left\| \nabla e_0 \right\|_{0,\Omega_0} \lesssim \left(\| e_0\|_{1,\Omega}^2 + \| e_0\|_{1,D}^2\right)^\frac{1}{2}\lesssim \|e\|_{1,\Omega} \lesssim \|\nabla e\|_{0,\Omega}.
	\end{equation}
	Furthermore, let $I_h : H_{0,\Gamma_D}^1(\Omega_0) \to V_h^0(\Omega_0)$ be the Scott-Zhang-type operator on $\Omega_0$ as introduced in (\ref{eq:L2H1scottzhang}), and let $e_{0h}:= I_h(e_0)\in V_h(\Omega_0)$. 
	Since $e_{0h}\in V_h(\Omega_0)$, from (\ref{eq:discrpb}) and by performing integration by parts, 
	\begin{align*}
	\int_\Omega fe_{0h} \,\mathrm dx + \int_{\Gamma_N} \review{g_N} e_{0h} \,\mathrm ds = \int_\Omega \nabla u_h \cdot \nabla e_{0h} \,\mathrm dx = \sum_{K\in\mathcal Q_\Omega}\int_{K\cap\Omega} \left(-\Delta u_h\right)e_{0h} \,\mathrm dx + \int_{\Gamma_N} \frac{\partial u_h}{\partial \mathbf n} e_{0h} \,\mathrm ds,
	\end{align*}
	and thus recalling the notation introduced in (\ref{eq:notation}),
	\begin{align*}
	\sum_{K\in\mathcal Q_\Omega} \int_{K\cap\Omega} r_K e_{0h} \,\mathrm dx + \sum_{F\in\mathcal F_N} \int_{F\cap\Gamma_N} j_F\,e_{0h}\,\mathrm ds + \sum_{K\in\mathcal Q_\trim} \int_{\gamma_K} j_K e_{0h} \,\mathrm ds=0. 
	\end{align*}
	Consequently, we can rewrite (\ref{eq:errorint}) as
	\begin{equation} \label{eq:beforestd}
	\|\nabla e\|^2_{0,\Omega} = \sum_{K\in\mathcal Q_\Omega} \int_{K\cap\Omega} r_K (e-e_{0h}) \,\mathrm dx + \sum_{F\in\mathcal F_N} \int_{F\cap\Gamma_N} j_F(e-e_{0h})\,\mathrm ds + \sum_{K\in\mathcal Q_\trim} \int_{\gamma_K} j_K (e-e_{0h}) \,\mathrm ds.
	\end{equation}
	To begin with, let us consider the first term. Using \review{the} H\"older inequality and the discrete Cauchy-Schwarz inequality, 
	\begin{align}
	&\sum_{K\in\mathcal Q_\Omega} \int_{K\cap\Omega} r_K(e_0-e_{0h})\,\mathrm dx \nonumber\\
	\leq &\sum_{K\in\mathcal Q_\Omega} \delta_K \|r_K\|_{0,K\cap\Omega} \,\delta_K^{-1} \|e_0-e_{0h}\|_{0,K\cap\Omega} \nonumber \\
	\lesssim &\left( \sum_{K\in\mathcal Q_\Omega} \delta_K^2 \|r_K\|^2_{0,K\cap\Omega}\right)^{\hspace{-0.1cm}\frac{1}{2}}\hspace{-0.2cm}\left(\sum_{K\in\mathcal Q_\untr} \delta_K^{-2} \|e_0-e_{0h}\|^2_{0,K} + \sum_{K\in\mathcal Q_\trim} \delta_K^{-2} \|e_0-e_{0h}\|^2_{0,K\cap\Omega} \right)^{\hspace{-0.1cm}\frac{1}{2}}\hspace{-0.1cm}. \label{eq:beforeclemppties}
	\end{align}
	Moreover, from Lemma \ref{lemma:savareinterior}, for all $K\in \mathcal Q_\trim$, 
	\begin{equation} \label{eq:alphaKineq}
	\delta_K^{-2} \|e_0-e_{0h}\|^2_{0,K\cap\Omega} \lesssim h_K^{-2}\|e_0-e_{0h}\|^2_{0,K} + \|\nabla(e_0-e_{0h})\|^2_{0,K}. 
	\end{equation}
	Thus by (\ref{eq:alphaKineq}), using property (\ref{eq:L2H1scottzhang}) of the Scott-Zhang-type operator \review{which is applicable} since $\mathcal{Q}$ is $\mathcal T$-admissible, and by the continuous extension property (\ref{eq:equive0e}), 
	\begin{align} \label{eq:intresclem}
	&\sum_{K\in\mathcal Q_\untr} \delta_K^{-2} \|e_0-e_{0h}\|^2_{0,K} + \sum_{K\in\mathcal Q_\trim} \delta_K^{-2} \|e_0-e_{0h}\|^2_{0,K\cap\Omega} \nonumber \\
	\lesssim &\sum_{K\in\review{\mathcal Q_\Omega}} h_K^{-2} \|e_0-e_{0h}\|^2_{0,K} + \sum_{K\in\mathcal Q_\trim} \|\nabla(e_0-e_{0h})\|^2_{0,K} \lesssim \|\nabla e_0\|^2_{0,\Omega_0} \lesssim \|\nabla e\|^2_{0,\Omega}. 
	\end{align}
	\review{Therefore}, combining (\ref{eq:beforeclemppties}) and (\ref{eq:intresclem}), 
	\begin{equation} \label{eq:internalresidualterm}
	\sum_{K\in\mathcal Q_\Omega} \int_{K\cap\Omega} r_K(e_0-e_{0h})\,\mathrm dx \lesssim \left( \sum_{K\in\mathcal Q_\Omega}\delta_K^2 \|r_K\|^2_{0,K\cap\Omega}\right)^\frac{1}{2} \|\nabla e\|^2_{0,\Omega}.
	\end{equation}
	
	Now, let us consider the second term of (\ref{eq:beforestd}). Similarly as for the internal residuals, 
	\begin{align}
	&\sum_{F\in\mathcal F_N} \int_{F\cap\Gamma_N} j_F(e-e_{0h})\,\mathrm ds \leq \sum_{F\in\review{\mathcal F_N}} \delta_F\left\|j_F\right\|_{0,F\cap\Gamma_N}\delta_F^{-1}\|e_0-e_{0h}\|_{0,F\cap\Gamma_N} \nonumber \\
	\lesssim &\left( \sum_{F\in\mathcal F_N}\delta_F^2 \|j_F\|^2_{0,F\cap\Gamma_N}\right)^\frac{1}{2} \hspace{-0.1cm}\left(\sum_{F\in\mathcal F_\untr} \delta_F^{-2} \|e_0-e_{0h}\|^2_{0,F} + \sum_{F\in\mathcal F_\trim} \delta_F^{-2} \|e_0-e_{0h}\|^2_{0,F\cap\Gamma_N} \right)^\frac{1}{2}\hspace{-0.1cm}. \label{eq:beforeclempptiesbd}
	\end{align}
	Moreover, from Lemma \ref{lemma:savareboundary}, for all $F\in \mathcal F_\trim$, 
	\begin{equation*} 
	\delta_F^{-2} \|e_0-e_{0h}\|^2_{0,F\cap\Gamma_N} \lesssim h_F^{-1}\|e_0-e_{0h}\|^2_{0,F} + \left|e_0-e_{0h}\right|^2_{\frac{1}{2},F}, 
	\end{equation*}
	and thus 
	\begin{equation} \label{eq:2partsbd}
	\sum_{F\in\mathcal F_\untr} \delta_F^{-2} \|e_0-e_{0h}\|^2_{0,F} + \sum_{F\in\mathcal F_\trim} \delta_F^{-2} \|e_0-e_{0h}\|^2_{0,F\cap\Gamma_N} \lesssim \sum_{F\in\review{\mathcal F_N}} h_F^{-1} \|e_0-e_{0h}\|^2_{0,F} + \sum_{F\in\mathcal F_\trim} |e_0-e_{0h}|^2_{\frac{1}{2},F}.
	\end{equation}
	For all $F\in\review{\mathcal F_N}$, let $K_F$ be the element of $\mathcal{Q}$ such that $\left(F\cap\Gamma_N\right) \subset \partial (K_F\cap\Omega)$, and note that by the shape regularity of $\mathcal Q$, $h_{K_F}\simeq h_{F}$. Then for the first term of (\ref{eq:2partsbd}), using the scaled trace inequality (\ref{eq:trace}), properties (\ref{eq:L2H1scottzhang}) and (\ref{eq:H1H1scottzhang}) of the Scott-Zhang-type operator, and by the continuous extension property (\ref{eq:equive0e}), 
	\begin{align}
	\sum_{F\in\review{\mathcal F_N}} h_F^{-1} \|e_0-e_{0h}\|^2_{0,F} &\lesssim \sum_{F\in\review{\mathcal F_N}} \big( h_{K_F}^{-2}\|e_0-e_{0h}\|^2_{0,K_F} + \|\nabla (e_0-e_{0h})\|^2_{0,K_F} \big) \lesssim \|\nabla e_0\|^2_{0,\Omega_0} \lesssim \|\nabla e\|^2_{0,\Omega}. \label{eq:bdry1}
	\end{align}
	\review{Furthermore, remarking that $e_0 - e_{0h} \in H_{0,\Gamma_D}^1(\Omega_0)$, then the second term of~(\ref{eq:2partsbd}) can be estimated using trace inequalities, property~(\ref{eq:H1H1scottzhang}) of the Scott-Zhang-type operator, and the continuous extension property~(\ref{eq:equive0e}) as follows:
		\begin{align}
		\sum_{F\in\mathcal F_\trim} |e_0-e_{0h}|^2_{\frac{1}{2},F} &\lesssim \sum_{F\in\mathcal F_\trim} \|e_0-e_{0h}\|^2_{1, K_F} \leq \| e_0 - e_{0h} \|^2_{1,\Omega_0} \nonumber \\
		& \lesssim \|\nabla( e_0-e_{0h} )\|^2_{0, \Omega_0} \lesssim \|\nabla e_0\|^2_{0,\Omega_0} \lesssim \|\nabla e\|_{0,\Omega}^2. \label{eq:bdry2}
		\end{align}}
	Therefore, combining (\ref{eq:beforeclempptiesbd}), (\ref{eq:2partsbd}), (\ref{eq:bdry1}) and (\ref{eq:bdry2}), 
	\begin{equation}
	\sum_{F\in\mathcal F_N} \int_{F\cap\Gamma_N} j_F(e-e_{0h})\,\mathrm ds \lesssim \left( \sum_{F\in\mathcal F_N} \delta_F^2 \|j_F\|^2_{0,F\cap\Gamma_N}\right)^\frac{1}{2} \|\nabla e\|_{0,\Omega}. \label{eq:boundaryterm}
	\end{equation}
	
	Finally, let us consider the last term of (\ref{eq:beforestd}): using \review{the} H\"older inequality and the discrete Cauchy-Schwarz inequality, then
	\begin{align}
	\sum_{K\in\mathcal Q_\trim} \int_{\gamma_K} j_K (e_0-e_{0h})\,\mathrm ds &\leq \sum_{K\in\mathcal Q_\trim} \|j_K\|_{0,\gamma_K}\|e_0-e_{0h}\|_{0,\gamma_K} \nonumber\\
	&\lesssim \left(\sum_{K\in\mathcal Q_\trim} h_K \|j_K\|^2_{0,\gamma_K}\right)^\frac{1}{2} \left( \sum_{K\in\mathcal Q_\trim} h_K^{-1}\|e_0-e_{0h}\|_{0,\gamma_K}^2\right)^\frac{1}{2}. \label{eq:estgammaK}
	\end{align}
	Moreover, by the local scaled trace inequality (\ref{eq:tracegamma}), by properties (\ref{eq:L2H1scottzhang}) and (\ref{eq:H1H1scottzhang}) of the Scott-Zhang-type operator, and by the continuous extension property (\ref{eq:equive0e}), 
	\begin{align}
	\sum_{K\in\mathcal Q_\trim} h_K^{-1}\|e_0-e_{0h}\|_{0,\gamma_K}^2 
	&\lesssim \sum_{K\in\mathcal Q_\trim} \left( h_K^{-2}\|e_0-e_{0h}\|^2_{0,K} + \|\nabla (e_0-e_{0h})\|^2_{0,K} \right) \lesssim \|\nabla e_0\|^2_{0,\Omega_0} \lesssim \|\nabla e\|_{0,\Omega}^2. \label{eq:estgammaK2}
	\end{align}
	Therefore, inserting (\ref{eq:estgammaK2}) in (\ref{eq:estgammaK}), we obtain
	\begin{align}
	\sum_{K\in\mathcal Q_\trim} \int_{\gamma_K} j_K (e_0-e_{0h})\,\mathrm ds &\lesssim \left(\sum_{K\in\mathcal Q_\trim} h_K \|j_K\|^2_{0,\gamma_K}\right)^\frac{1}{2} \|\nabla e\|_{0,\Omega}. \label{eq:lastterm}
	\end{align} 
	
	To conclude, we combine (\ref{eq:beforestd}), (\ref{eq:internalresidualterm}), (\ref{eq:boundaryterm}) and (\ref{eq:lastterm}) and we \review{divide by} $\|\nabla e\|_{0,\Omega}$ on both sides.
	
\end{proof}

\begin{remark}
	If problem (\ref{eq:weakpb}) is discretized using the THB-spline basis counterpart of the HB-spline basis defined in (\ref{eq:thbsplinebasis}), then Theorem \ref{thm:upperbound} is still valid under the same hypothesis. The interested reader is referred to \cite{vuong,thbgiannelli} for more details about THB-splines.
	In particular, whether HB-splines or THB-splines are used, the mesh $\mathcal Q$ only needs to be $\mathcal T$-admissible (constraint coming from the study of THB-splines, see Definition \ref{def:admissibility}), which is a weaker assumption than the $\mathcal H$-admissibility defined for example in \cite[Section~4.1.3]{reviewadaptiveiga} (which is similarly coming from the study of HB-splines, see \cite{gantneradaptiveiga}). Indeed, in the proof of Theorem \ref{thm:upperbound}, the $\mathcal T$-admissibility is only needed to ensure properties (\ref{eq:L2H1scottzhang}) and (\ref{eq:H1H1scottzhang}) of the Scott-Zhang-type operator $I_h: H_{0,\Gamma_D}^1(\Omega_0) \to V_h^0(\Omega_0)$. The construction of such \review{an} operator depends on the discrete space $V_h^0(\Omega_0)$ but not on the choice of basis for $V_h^0(\Omega_0)$, so it does not depend on whether one considers an HB-spline basis or its truncated counterpart. However, the sparsity and the conditioning of the matrices involved in the computation depend on this choice.
\end{remark}

\review{\begin{remark} \label{rmk:extC0}
		The proof of Theorem~\ref{thm:upperbound} can be readily extended to the setting of $C^0$-continuous trimmed (T)HB-spline basis functions through the introduction of appropriate jump terms, as the proof only relies on discretization-independent trace inequalities and on the existence of a Scott-Zhang-type interpolation operator with properties~(\ref{eq:L2H1scottzhang}) and (\ref{eq:H1H1scottzhang}). 
	\end{remark}
	
	\begin{remark} \label{rmk:extweakDirichlet}
		Similar arguments could be used to extend the proof of Theorem~\ref{thm:upperbound} to the more general case in which Dirichlet boundary conditions are also applied along the trimming boundary $\Gamma^\mathrm t$. Indeed, in this case, Dirichlet boundary conditions need to be enforced weakly, using for instance Nitsche’s method, but then the discrete problem needs to be stabilized, following for instance \cite{ghostpenalty,BURMAN2012328,stabcutiga,puppistabilization}. However, some work remains to be done because special care must to be given to the extra terms coming from the stabilized Nitsche's method, as they contain integrals over the trimmed Dirichlet boundary. 
	\end{remark}
	
	\begin{remark}
		Theorem~\ref{thm:upperbound} states the reliability of the proposed error estimator but it does not state its efficiency, as the efficiency proof presents some further challenges. For instance, classical efficiency proofs on non-trimmed domains (see, e.g., \cite{verfurth1994posteriori,nochettoprimer,buffagiannelli1}) make use of cut-off polynomial functions to localize the error in one element or in a patch of elements. Then, inverse inequalities are used to bound each estimator's residual contribution by a local error term. However, polynomial cut-off functions, sometimes also called bubble functions, cannot be defined in the (non-necessarily polygonal) active part of a trimmed element. Another path one could take is to extend each trimmed element residual to the full non-trimmed element, by considering the natural polynomial extension of the discrete solution and an $L^2$-extension of the source and Neumann boundary functions. Then, one could perform the same standard steps of the efficiency proof by considering cut-off functions in the non-trimmed elements. But in this case, the wrong element scalings would be obtained if $|K\cap\Omega| \ll |K|$ for an element $K\in\mathcal Q_\trim$. 
\end{remark}}

\section{An $h$-refinement adaptive strategy on trimmed geometries}\label{sec:adapt}
Closely following the framework of adaptive finite elements found in \cite{nochettoprimer} for elliptic partial differential equations, and \cite{trimmedshells} in the context of hierarchical trimmed geometries, we recall the four main building blocks of adaptivity, composing one iteration of the iterative process:
\begin{figure}[h!]
	\begin{center}
		\begin{tikzpicture}[
		start chain = going right,
		node distance=7mm,
		block/.style={shape=rectangle, draw,
			inner sep=1mm, align=center,
			minimum height=5mm, minimum width=15mm, on chain}] 
		\node[block] (n1) {SOLVE};
		\node[block] (n2) {ESTIMATE};
		\node[block] (n3) {MARK};
		\node[block] (n4) {REFINE};
		
		\draw[->] (n1.east) --  + (0,0mm) -> (n2.west);
		\draw[->] (n2.east) --  + (0,0mm) -> (n3.west);
		\draw[->] (n3.east) --  + (0,0mm) -> (n4.west);
		\draw[<-] (n1.south) --  + (0,-5mm) -| (n4.south);
		\end{tikzpicture}
	\end{center}
\end{figure}

The first two modules, SOLVE and ESTIMATE, have been elaborated in Sections \ref{sec:modelpb} and \ref{sec:estimator}, respectively. That is, we approximate the exact solution $u$ of problem (\ref{eq:originalpb}) using the Galerkin method with finite basis $\mathcal H_\Omega$, which is the HB-spline basis defined on a $\mathcal T$-admissible mesh $\mathcal Q$, in the trimmed geometry $\Omega$, as introduced in (\ref{eq:HOmegabasis}). We obtain the discrete solution $u_h$ solving problem (\ref{eq:discrpb}). Then, we can estimate the energy error $\|\nabla (u-u_h)\|_{0,\Omega}$ thanks to the estimator $\mathscr{E}(u_h)$ defined in (\ref{eq:estimator}), using Theorem \ref{thm:upperbound}. 

Remark that $\mathscr E(u_h)$ is naturally decomposed into local element contributions as follows. \review{If we} let $$\mathcal F_N^K := \left\{ F\in \mathcal F_N : \left(F\cap\Gamma_N\right) \subset \partial (K\cap\Omega) \right\},$$
then the estimator rewrites as
\begin{align*}
\mathscr E(u_h) &:= \left[ \sum_{K\in\mathcal Q} \mathscr{E}_K(u_h)^2\right]^\frac{1}{2},
\end{align*}
where for all $K\in \review{\mathcal Q}$, 
\begin{align*}
\mathscr E_K(u_h)^2 := \begin{cases}
\delta_K^2 \left\| f+\Delta u_h\right\|_{0,K}^2 + \displaystyle\sum_{F\in\mathcal F_N^K} \delta_F^2 \left\|\review{g_N}-\frac{\partial u_h}{\partial \mathbf n}\right\|^2_{0,F\cap\Gamma_N}, & \forall K\in \mathcal Q_\untr \\
\delta_K^2 \left\| f+\Delta u_h\right\|_{0,K\cap\Omega}^2 + \displaystyle\sum_{F\in\mathcal F_N^K} \delta_F^2 \left\|\review{g_N}-\frac{\partial u_h}{\partial \mathbf n}\right\|^2_{0,F\cap\Gamma_N} + h_K \left\|\review{g_N}-\frac{\partial u_h}{\partial \mathbf n}\right\|^2_{0,\gamma_K}, & \forall K\in \mathcal Q_\trim \\
0 & \text{else,}
\end{cases}
\end{align*}
and $\delta_K$ and $\delta_F$ are defined in (\ref{eq:deltas}). 

Therefore, the module MARK selects a set of elements $\mathcal M \subset \mathcal Q$ according to some marking strategy such as the D\"{o}rfler marking, see \cite{dorfler}, i.e., given a fixed parameter $\theta \in (0,1]$, the set of marked elements $\mathcal M$ satisfies
\begin{equation}\label{eq:dorfler}
\left[ \sum_{K\in\mathcal M} \mathscr{E}_K(u_h)^2\right]^\frac{1}{2} \geq \theta \mathscr{E}(u_h).
\end{equation}
This strategy guarantees that the elements in $\mathcal M$ give a substantial contribution to the total error estimator $\mathscr{E}(u_h)$. Since for all $K\in \mathcal Q$ such that $K\cap \Omega = \emptyset$, $\mathscr{E}_K(u_h) = 0$, then the elements outside of the trimmed domain will never be marked. But \review{from the algorithmic point of view}, to guarantee that a basis function $B\in\mathcal H_\Omega$ is deactivated when all the elements in $\supp{B}\cap\Omega$ are marked for refinement, then we also need to \review{enlarge $\mathcal M$ with} the so-called ghost elements, see \cite{trimmedshells}. That is, when a trimmed element $K$ is marked for refinement, we add all the elements in
$$\review{\left\{ K'\in\mathcal Q: K'\cap\Omega = \emptyset, \, \text{lev}(K)=\text{lev}(K') \text{ and } \exists B\in \mathcal H \text{ such that } K\cup K'\subset\supp{B} \right\}.}$$ 
\review{to the set $\mathcal M$. Note that this is only needed for algorithmic reasons, while it does not change the active refined basis determined by the marking strategy.}

Finally, based \review{on} the set $\mathcal M$ of marked elements, the hierarchical mesh $\mathcal Q$ is refined so that its class of $\mathcal T$-admissibility and its properties are preserved, that is, so that the refined mesh satisfies assumptions (A1) and (A2). Since trimming does not influence the admissibility property of the mesh defined on $\Omega_0$ but only the marking, then the REFINE module is the same as in the non-trimmed case developed in \cite{buffagiannelli1}. For completeness, we summarize the procedure in Algorithms \ref{algo:ref} and \ref{algo:refrec}, using the notion of hierarchical element neighborhood $\mathcal N(\mathcal Q, K,\mu)$ with respect to the admissibility class $\mu$, introduced in Section \ref{ss:hbs}.

\savebox{\algleft}{%
	\begin{minipage}{.49\textwidth}
		\begin{algorithm}[H]
			\caption{\\$\mathcal Q = $ REFINE($\mathcal Q$, $\mathcal M$, $\mu$)}\label{algo:ref}
			\begin{algorithmic}[1]
				\ForAll{$K\in \mathcal Q \cap \mathcal M$}
				\State $\mathcal Q = $ REFINE\_RECURSIVE($\mathcal Q$, $K$, $\mu$)
				\EndFor\\
				\Return $\mathcal Q$
			\end{algorithmic}
		\end{algorithm}
\end{minipage}}
\savebox{\algright}{%
	\begin{minipage}{.5\textwidth}
		\begin{algorithm}[H]
			\caption{\\$\mathcal Q = $ REFINE\_RECURSIVE($\mathcal Q$, $K$, $\mu$)}\label{algo:refrec}
			\begin{algorithmic}[1]
				\ForAll{$K'\in \mathcal N(\mathcal Q, K,\mu)$}
				\State $\mathcal Q = $ REFINE\_RECURSIVE($\mathcal Q$, $K'$, $\mu$)
				\EndFor 
				\If{$K$ has not yet been subdivided}
				\State Subdivide $K$ and update $\mathcal Q$ by replacing $K$ with its children
				\EndIf \\
				\Return $\mathcal Q$
			\end{algorithmic}
		\end{algorithm}
\end{minipage}}
\noindent\usebox{\algleft}\hfill\usebox{\algright}

\section{Numerical experiments}\label{sec:numexp}
 In this last section, we present a few numerical examples to illustrate the validity of the proposed error estimator. All the presented numerical experiments have been implemented in GeoPDEs \cite{geopdes}, an open-source and free Octave/Matlab package for the resolution of partial differential equations specifically designed for isogeometric analysis. Moreover, an in-house tool presented in \cite{antolinvreps} has been used for the geometric description and the local meshing process required for the integration of trimmed geometries. All considered meshes are built and refined to be and remain $\mathcal T$-admissible of class $\mu = p$ as defined in Definition \ref{def:admissibility}, where $p$ is the degree of the B-splines considered in each experiment.

\begin{figure}
	\begin{subfigure}[t]{0.45\textwidth}
		\begin{center}
			\begin{tikzpicture}[scale=4]
			\draw[c5!50] (0.25,0) -- (0.25,1);
			\draw[c5!50] (0.5,0) -- (0.5,1);
			\draw[c5!50] (0.75,0) -- (0.75,1);
			\draw[c5!50] (0,0.25) -- (1,0.25);
			\draw[c5!50] (0,0.5) -- (1,0.5);
			\draw[c5!50] (0,0.75) -- (1,0.75);
			\draw[thick] (0.25,0.25) circle(0.1) ;
			\draw[thick] (0.75,0.75) circle(0.1) ;
			\draw[thick,c1] (0,0) -- (1,0);
			\draw[thick] (0,0) --(0,1) -- (1,1) -- (1,0);
			\draw[thick,c1] (0.5,0) node[below]{$\Gamma_D$};
			\draw (0.25,0.25) node{$D_1$};
			\draw (0.75,0.75) node{$D_2$};
			\draw (0.5,-0.3) node{$ $};
			\end{tikzpicture}
			\caption{Trimmed domain $\Omega$ and original mesh (in gray).} \label{fig:geomregularex}
		\end{center}
	\end{subfigure}
	~ 
	\begin{subfigure}[t]{0.52\textwidth}
		\begin{center}
			\begin{tikzpicture}[scale=0.85]
			\begin{axis}[xmode=log, ymode=log, xlabel=$N_\text{dof}$, ylabel=Error/Estimator, grid, grid style={dotted}]
			\addplot[mark=+, c2, thick, densely dashed, mark options={solid,scale=1.5}] table [x=ndof, y=est, col sep=comma] {data/square_bucato_Tadm.csv};
			\addplot[mark=o, c1, thick, mark options={solid,scale=1}] table [x=ndof, y=err, col sep=comma] {data/square_bucato_Tadm.csv};
			\logLogSlopeReverseTriangle{0.79}{0.1}{0.28}{1}{black}
			\legend{$\mathscr E\left(u_h\right)$, $\left|u-u_h\right|_{1,\Omega}$}; 
			\logLogSlopeReverseTriangle{0.79}{0.1}{0.075}{1}{black}
			\end{axis}
			\end{tikzpicture}
			\caption{Convergence of the energy error and of the estimator with respect to the total number of degrees of freedom. } \label{fig:cvregularex}
		\end{center}
	\end{subfigure}
	\caption{Trimmed geometry and convergence of the error and estimator on a regular problem.} \label{fig:regularex}
\end{figure}
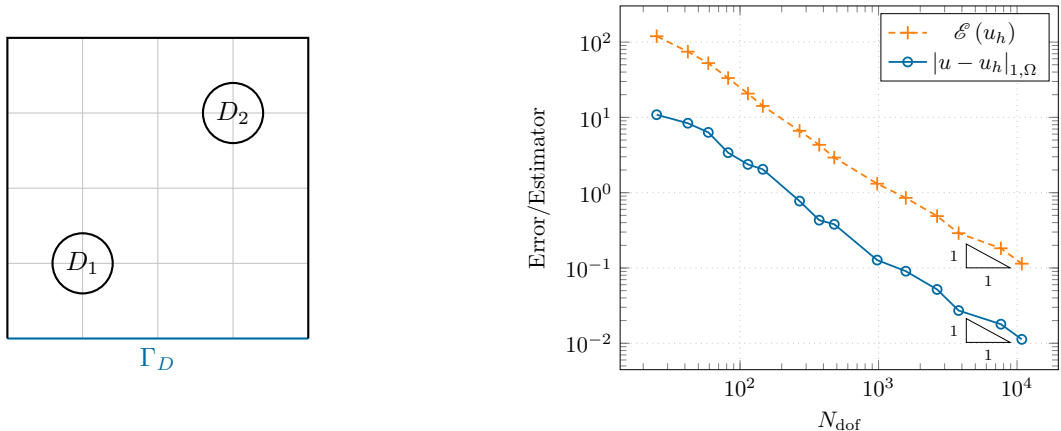

\subsection{Adaptive mesh refinement on a regular solution}
As a first numerical experiment, we choose a regular problem defined in the unit square $\Omega_0=(0,1)^2$ trimmed by two disks $D_1$ and $D_2$ of radius $0.1$ centered in $\left(0.25,0.25\right)$ and $\left(0.75,0.75\right)$, respectively. That is, the trimmed geometry $\Omega = (0,1)^2\setminus \left(\overline{D_1} \cup \overline{D_2}\right)$ is as illustrated in Figure \ref{fig:geomregularex}. We choose the data $f$, $g_D$ and $g_N$ such that the exact solution of Laplace problem (\ref{eq:originalpb}) is $u(x,y) = \sin(3\pi x)+\cos(5\pi y)$, with $\Gamma_D := (0,1)\times \{0\}$ and $\Gamma_N := \partial \Omega \setminus \overline{\Gamma_D}$. 

At the first iteration of the adaptive process presented in Section \ref{sec:adapt}, the non-trimmed geometry $\Omega_0$ is meshed with four elements in each direction, as represented in Figure \ref{fig:geomregularex}. At each iteration, the differential problem is discretized using \review{the} Galerkin method with HB-splines of degree $2$ on the trimmed mesh, as explained in Section \ref{sec:HBIGAformulation}, and the numerical solution $u_h$ of (\ref{eq:discrpb}) is obtained. The energy error $\left\|\nabla(u-u_h)\right\|_{0,\Omega}$ and the estimator (\ref{eq:estimator}) are computed. The latter is then used to drive the mesh refinement strategy described in Algorithm \ref{algo:ref} for which the elements to refine are determined using the D\"orfler marking strategy (\ref{eq:dorfler}) with parameter $\theta = 0.8$. The iterative process is then stopped whenever the number of degrees of freedom of the HB-spline space exceeds $10^4$. 

The results are presented in Figure \ref{fig:cvregularex} and validate the theory presented in Section \ref{sec:estimator}. Indeed, we can observe that the estimator follows the behavior of the energy error, both of them having the same convergence rate of $\mathcal O\left(N_\text{dof}^{-\frac{p}{2}}\right) = \mathcal O\left(N_\text{dof}^{-1}\right)$. The effectivity index is nearly equal to $10$, that is, it is very similar to the effectivity index observed in the case of a residual estimator on non-trimmed geometries, \review{see e.g., \cite{reviewadaptiveiga}}.

\subsection{Independence from the size of the active parts of the trimmed elements: regular solution}
The aim of this second numerical experiment is to verify that the effectivity index defined by $$\eta := \displaystyle\frac{\mathscr{E}(u_h)}{\left\|\nabla(u-u_h)\right\|_{0,\Omega}}$$ is independent \review{of} the measure of the active part of the trimmed elements, that is, \review{of} the measure of $K\cap\Omega$ for every $K\in\mathcal Q$. To verify this, let $\Omega$ be the pentagon obtained by trimming the unit square $\Omega_0 := (0,1)^2$ by the straight line passing through the points $(0,0.25)$ and $(0.75,1)$, as represented in Figure \ref{fig:pentagongeom}. We choose the data $f$, $g_D$ and $g_N$ such that the exact solution of Laplace problem (\ref{eq:originalpb}) is $u(x,y) = \arctan\big(15(x-y+0.25)\big)$, with $\Gamma_D := \big( (0,1)\times \{0\} \big) \cup \big( \{1\}\times (0,1)\big)$ and $\Gamma_N := \partial \Omega \setminus \overline{\Gamma_D}$. 

At the first iteration of the adaptive process, the non-trimmed geometry $\Omega_0$ is meshed with four elements in each direction such that the internal knot lines are the lines defined by $x = \displaystyle\frac{k}{4}+\varepsilon$ and $y = \displaystyle\frac{k}{4}-\varepsilon$ for $k=1,2,3$ and $\varepsilon > 0$ small. In this way, the trimmed mesh has elements whose triangular active part are of measure proportional to $\varepsilon^2$, as shown in Figure \ref{fig:pentagongeom}. We solve Galerkin problem (\ref{eq:discrpb}) with HB-splines of degree $3$ in each direction, with $\varepsilon = 10^{-q}$ for $q=5,6,7$, and we denote $u_h^\varepsilon$ the obtained discrete solution. Note that the dependence on $\varepsilon$ is only on the mesh and not on the geometry $\Omega$ itself. Both uniform refinement and the adaptive refinement described in Section \ref{sec:adapt} with $\theta = 0.9$ are performed. The algorithm stops when the number of degrees of freedom of the HB-spline space exceeds $10^4$. The results are given in Figure \ref{fig:cvpentagonGR} and \ref{fig:cvpentagonGERS}, respectively, \review{and the final mesh obtained by adaptive refinement is represented in Figure~\ref{fig:pentagonfinalmesh}}. 

\begin{figure}
	\begin{subfigure}[t]{0.48\textwidth}
		\begin{center}
			\begin{tikzpicture}[scale=3.5]
			\draw[c5!50] (0.28,0) -- (0.28,1);
			\draw[c5!50] (0.53,0) -- (0.53,1);
			\draw[c5!50] (0.78,0) -- (0.78,1);
			\draw[c5!50] (0,0.22) -- (1,0.22);
			\draw[c5!50] (0,0.47) -- (1,0.47);
			\draw[c5!50] (0,0.72) -- (1,0.72);
			\fill[c4!50] (0.22,0.47) -- (0.28,0.47) -- (0.28,0.53) -- cycle;
			\fill[c4!50] (0.47,0.72) -- (0.53,0.72) -- (0.53,0.78) -- cycle;
			\draw[thick,c1] (0,0) -- (1,0) -- (1,1);
			\draw[thick] (0,0) --(0,0.25);
			\draw[thick] (0.75,1) -- (1,1);
			\draw[thick,c5!50] (0,1) -- (0,0.25);
			\draw[thick,c5!50] (0.75,1) -- (0,1);
			\draw[thick] (0,0.25) -- (0.75,1);
			\draw[thick,c1] (0.9,0) node[below]{$\Gamma_D$};
			\draw[c5!90!black] (0,1) node[left]{\footnotesize $1$};
			\draw[c5!90!black] (0,0.72) node[left]{\footnotesize $\frac{3}{4}-\varepsilon$};
			\draw[c5!90!black] (0,0.47) node[left]{\footnotesize $\frac{1}{2}-\varepsilon$};
			\draw[c5!90!black] (0,0.22) node[left]{\footnotesize $\frac{1}{4}-\varepsilon$};
			\draw[c5!90!black] (0,0) node[left]{\footnotesize $0$};
			\draw [c5!90!black](1,1) node[above]{\footnotesize $1$};
			\draw[c5!90!black] (0.79,1) node[above]{\footnotesize $\frac{3}{4}+\varepsilon$};
			\draw[c5!90!black] (0.53,1) node[above]{\footnotesize $\frac{1}{2}+\varepsilon$};
			\draw[c5!90!black] (0.27,1) node[above]{\footnotesize $\frac{1}{4}+\varepsilon$};
			\draw[c5!90!black] (0,1) node[above]{\footnotesize $0$};
			\end{tikzpicture}
			\caption{Trimmed domain $\Omega$, original mesh (in gray), and highlighted cut elements with active part of measure $\simeq \varepsilon^2$.} \label{fig:pentagongeom}
		\end{center}
	\end{subfigure}
	~ 
	\begin{subfigure}[t]{0.48\textwidth}
		\begin{center}
			\includegraphics[scale=0.103]{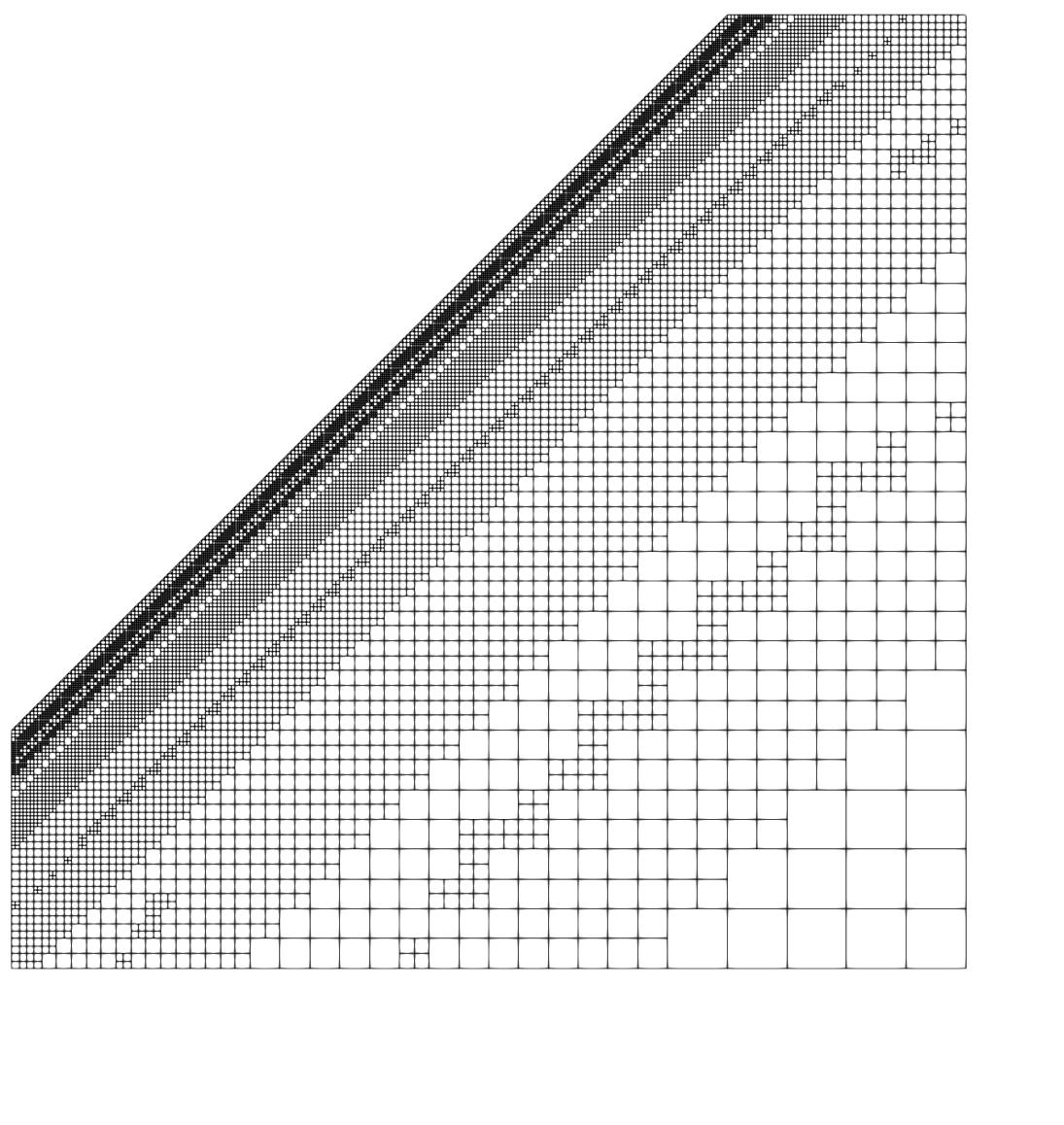}\hspace{2.5cm}
			\caption{\review{Final active mesh obtained with \\adaptive mesh refinement.}} \label{fig:pentagonfinalmesh}
		\end{center}
	\end{subfigure}
	~ 
	\begin{subfigure}[t]{0.5\textwidth}
		\begin{center}
			\begin{tikzpicture}[scale=0.77]
			\begin{axis}[xmode=log, ymode=log, xlabel=$N_\text{dof}$, ylabel=Error/Estimator, legend pos = outer north east, legend style={draw=none,row sep=0.15cm}, grid, grid style={dotted},ymin=3.5e-6,ymax=70,xmin = 15, xmax = 20000]
			\addlegendimage{black, thick, densely dashed};
			\addlegendimage{black, thick};
			\addlegendimage{mark=o, c1, thick, mark options={scale=1.4}};
			\addlegendimage{mark=square, c2, thick, mark options={scale=1.5}};
			\addlegendimage{mark=triangle, c3, thick, mark options={scale=1.5}};
			\addplot[mark=o, c1, thick, densely dashed, mark options={solid,scale=1.4}] table [x=ndofs, y=est, col sep=comma] {data/pentagon_est_p3_delta3_GR.csv};
			\addplot[mark=o, c1, thick, mark options={solid,scale=1.4}] table [x=ndofs, y=err, col sep=comma] {data/pentagon_est_p3_delta3_GR.csv};
			\addplot[mark=square, c2, thick, densely dashed, mark options={solid,scale=1.5}] table [x=ndofs, y=est, col sep=comma] {data/pentagon_est_p3_delta4_GR.csv};
			\addplot[mark=square, c2, thick, mark options={solid,scale=1.5}] table [x=ndofs, y=err, col sep=comma] {data/pentagon_est_p3_delta4_GR.csv};
			\addplot[mark=triangle, c3, thick, densely dashed, mark options={solid,scale=1.5}] table [x=ndofs, y=est, col sep=comma] {data/pentagon_est_p3_delta5_GR.csv};
			\addplot[mark=triangle, c3, thick, mark options={solid,scale=1.5}] table [x=ndofs, y=err, col sep=comma] {data/pentagon_est_p3_delta5_GR.csv};
			\logLogSlopeReverseTriangle{0.77}{0.1}{0.23}{1.5}{black}
			\legend{$\mathscr E\left(u_h^\varepsilon\right)$, $\left|u-u_h^\varepsilon\right|_{1,\Omega}$, $\varepsilon = 10^{-5}$, $\varepsilon = 10^{-6}$, $\varepsilon = 10^{-7}$};
			\end{axis}
			\end{tikzpicture}
			\caption{Convergence of the error and of the estimator under uniform mesh refinement. \hspace{1.8cm}{\color{white}.}} \label{fig:cvpentagonGR}
		\end{center}
	\end{subfigure}
	~ 
	\begin{subfigure}[t]{0.32\textwidth}
		\begin{center}
			\begin{tikzpicture}[scale=0.77]
			\begin{axis}[xmode=log, ymode=log, xlabel=$N_\text{dof}$, ylabel=Error/Estimator, 
			ylabel near ticks, yticklabel pos=right, grid, grid style={dotted},ymin=3.5e-6,ymax=70, xmin = 15, xmax = 20000]
			\addplot[mark=o, c1, thick,  densely dashed, mark options={solid,scale=1.4}] table [x=ndofs, y=est, col sep=comma] {data/pentagon_est_p3_delta3_GERS01_Tadm.csv};
			\addplot[mark=o, c1, thick, mark options={solid,scale=1.4}] table [x=ndofs, y=err, col sep=comma] {data/pentagon_est_p3_delta3_GERS01_Tadm.csv};
			\addplot[mark=square, c2, thick,  densely dashed, mark options={solid,scale=1.5}] table [x=ndofs, y=est, col sep=comma] {data/pentagon_est_p3_delta4_GERS01_Tadm.csv};
			\addplot[mark=square, c2, thick, mark options={solid,scale=1.5}] table [x=ndofs, y=err, col sep=comma] {data/pentagon_est_p3_delta4_GERS01_Tadm.csv};
			\addplot[mark=triangle, c3, thick,  densely dashed, mark options={solid,scale=1.5}] table [x=ndofs, y=est, col sep=comma] {data/pentagon_est_p3_delta5_GERS01_Tadm.csv};
			\addplot[mark=triangle, c3, thick, mark options={solid,scale=1.5}] table [x=ndofs, y=err, col sep=comma] {data/pentagon_est_p3_delta5_GERS01_Tadm.csv};
			\logLogSlopeReverseTriangle{0.75}{0.1}{0.1}{1.5}{black}
			\end{axis}
			\end{tikzpicture}
			\caption{Convergence of the error and of the estimator under adaptive mesh refinement. } \label{fig:cvpentagonGERS}
		\end{center}
	\end{subfigure}
	\caption{Trimmed pentagon geometry and convergence of the energy error and estimator with respect to the number of degrees of freedom.} \label{fig:pentagon}
\end{figure}

As in the first example, we can see that under both uniform and adaptive refinement, the estimator follows well the behavior of the energy error in the trimmed geometry, both of them having a convergence rate of $\mathcal O\left(N_\text{dof}^{-\frac{p}{2}}\right) = \mathcal O\left(N_\text{dof}^{-\frac{3}{2}}\right)$. Moreover, the presented results confirm the fact that the effectivity index is independent \review{of} $\varepsilon$, that is, it is independent \review{of} the measure of the active part of the trimmed elements. Indeed, for the different chosen values of $\varepsilon$, the curves representing the estimator are almost superposed, and this is also the case for the curves representing the numerical error. At the first iteration, where $\varepsilon \ll h$, the effectivity index is equal to $11.9$ in all cases. Once the asymptotic regime is attained, the effectivity index is equal to $11.0$ when uniform refinement is performed, while it is equal to $7.3$ when adaptive refinement is performed, and this is true for every value of $\varepsilon$.

\subsection{Independence from the size of the active parts of the trimmed elements: singular solution}
To perform a more severe test with respect to the previous one, let us consider a problem whose solution presents a corner singularity, on a trimmed geometry with small active trimmed elements around that corner. To do so, let us consider the L-shaped domain $\Omega$ obtained by trimming the square $(0.5,1)\times (0,0.5)$ from the unit square $(0,1)^2$, as illustrated in Figure \ref{fig:Lshapegeom}. We choose the data $f$, $g_D$ and $g_N$ such that the exact solution of Laplace problem (\ref{eq:originalpb}) is $u(r,\varphi) = r^\frac{2}{3}\sin\left(\displaystyle\frac{2\varphi}{3}\right)$ in the polar coordinate system $(\mathbf e_r, \mathbf e_\varphi)$ centered in $(0.5,0.5)$, as represented in Figure \ref{fig:Lshapegeom}, with $\Gamma_D := \big( (0,1)\times \{1\} \big) \cup \big( \{0\}\times (0,1)\big)$ and $\Gamma_N := \partial \Omega \setminus \overline{\Gamma_D}$. 

\begin{figure}
	\begin{subfigure}[t]{0.48\textwidth}
		\begin{center}
			\begin{tikzpicture}[scale=3.5]
			\draw[c5!50] (0.22,0) -- (0.22,1);
			\draw[c5!50] (0.47,0) -- (0.47,1);
			\draw[c5!50] (0.72,0) -- (0.72,1);
			\draw[c5!50] (0,0.28) -- (1,0.28);
			\draw[c5!50] (0,0.53) -- (1,0.53);
			\draw[c5!50] (0,0.78) -- (1,0.78);
			\draw[-{Stealth[width=2mm]},c3] (0.5,0.5) -- (0.7,0.5); 
			\draw[c3!190] (0.6,0.51) node[below]{$\mathbf{e}_r$}; 
			\draw[-{Stealth[width=2mm]},c3] (0.575,0.5) arc (0:270:0.075);
			\draw[c3!190] (0.38,0.5) node[above]{$\mathbf{e}_\varphi$}; 
			\draw[thick,c1] (0,0) -- (0,1) -- (1,1);
			\draw[thick] (0,0) --(0.5,0) -- (0.5,0.5) -- (1,0.5) -- (1,1);
			\draw[thick,c5!50] (1,0) -- (0.5,0);
			\draw[thick,c5!50] (1,0) -- (1,0.5);
			\draw[thick,c1] (0.1,1) node[above]{$\Gamma_D$};
			\draw[c5!50!black] (1,1) node[right]{\footnotesize $1$};
			\draw[c5!50!black] (1,0.78) node[right]{\footnotesize $\frac{3}{4}+\varepsilon$};
			\draw[c5!50!black] (1,0.53) node[right]{\footnotesize $\frac{1}{2}+\varepsilon$};
			\draw[c5!50!black] (1,0.28) node[right]{\footnotesize $\frac{1}{4}+\varepsilon$};
			\draw[c5!50!black] (1,0) node[right]{\footnotesize $0$};
			\draw[c5!50!black] (1,0) node[below]{\footnotesize $1$};
			\draw[c5!50!black] (0.73,0) node[below]{\footnotesize $\frac{3}{4}-\varepsilon$};
			\draw[c5!50!black] (0.47,0) node[below]{\footnotesize $\frac{1}{2}-\varepsilon$};
			\draw[c5!50!black] (0.21,0) node[below]{\footnotesize $\frac{1}{4}-\varepsilon$};
			\draw[c5!50!black] (0,0) node[below]{\small $0$};
			\end{tikzpicture}
			\caption{Parametric grimmed L-shaped domain $\Omega$ and original mesh (in gray).} \label{fig:Lshapegeom}
		\end{center}
	\end{subfigure}
	~
	\begin{subfigure}[t]{0.48\textwidth}
		\begin{center}
			\begin{tikzpicture}[]
			\pgftext{\includegraphics[scale=0.1]{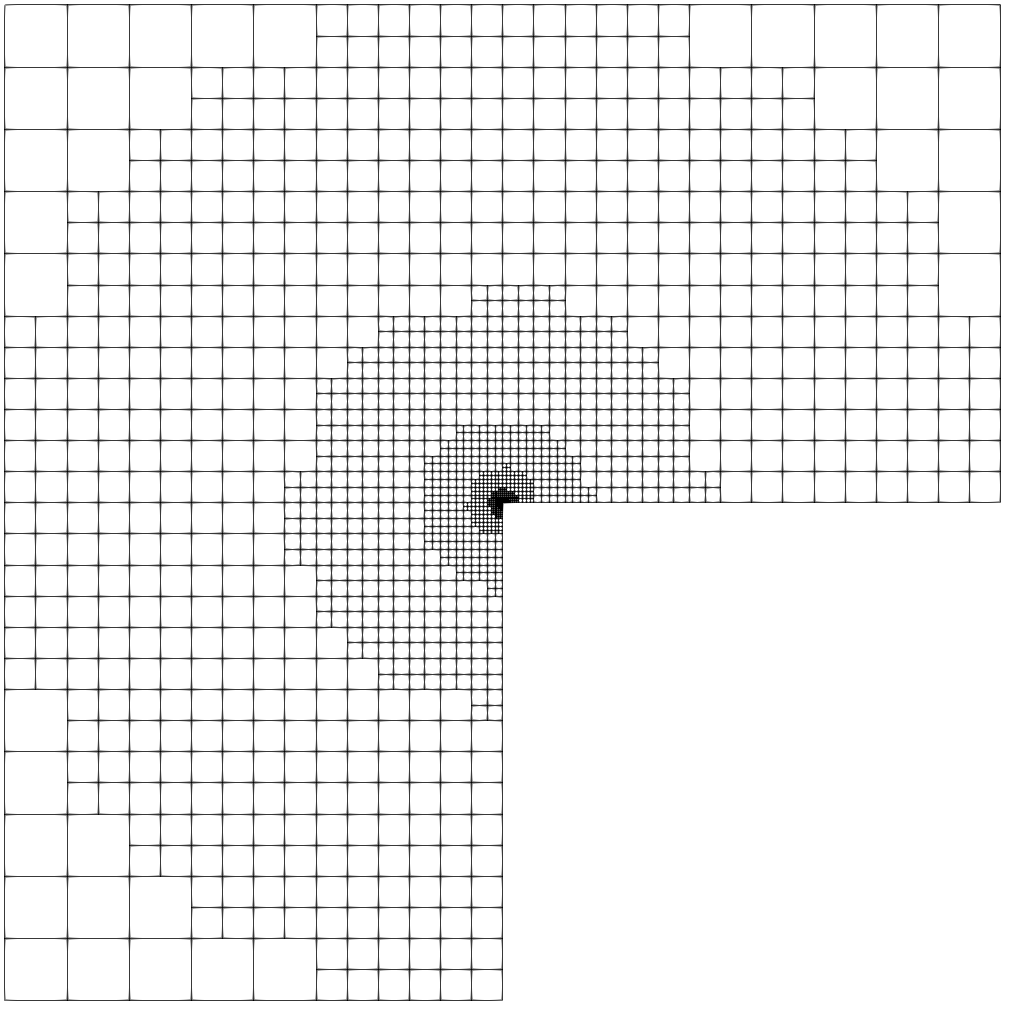}} at (0pt,0pt);
			\draw[white] (0,-2.45) -- (0.1,-2.45);
			\end{tikzpicture}
			\caption{\review{Final active mesh on $\Omega$ obtained with adaptive mesh refinement.}} \label{fig:Lshapefinalmesh}
		\end{center}
	\end{subfigure}
	~ 
	\begin{subfigure}[t]{0.48\textwidth}
		\begin{center}
			\begin{tikzpicture}[scale=1.5]
			\draw[c5!50] (-1.4413, 0.5970) arc (157.5:202.5:1.56);
			\draw[c5!50] (-1.9032, 0.7883) arc (157.5:202.5:2.06);
			\draw[c5!50] (-2.3651, 0.9797) arc (157.5:202.5:2.56);
			\draw[c5!50] (-0.9997, 0.0236) -- (-2.9992, 0.0707);
			\draw[c5!50] (-0.9759, 0.2181) -- (-2.9278, 0.6544);
			\draw[c5!50] (-0.9851, -0.1719) -- (-2.9553, -0.5158);
			\draw[-{Stealth[width=2mm]},c3] (-2,0) -- (-1.6,0); 
			\draw[c3!190] (-1.77,0.02) node[below]{$\mathbf{e}_r$}; 
			\draw[-{Stealth[width=2mm]},c3] (-1.85,0) arc (0:270:0.15);
			\draw[c3!190] (-2.25,0) node[above]{$\mathbf{e}_\varphi$}; 
			\draw[thick,c1] (-2.7716, 1.1481) arc (157.5:202.5:3);
			\draw[thick] (-0.9239,0.3827) arc (157.5:180:1);
			\draw[thick,c5!50] (-1,0) arc (180:202.5:1);
			\draw[thick] (-2,0) arc (180:202.5:2);
			\draw[thick] (-2,0) -- (-1,0);
			\draw[thick,c1] (-2.7716, 1.1481) -- (-0.9239,0.3827);
			\draw[thick,c5!50] (-1.8478,-0.7654) -- (-0.9239,-0.3827);
			\draw[thick] (-2.7716, -1.1481) -- (-1.8478,-0.7654);
			\draw[thick,c1] (-2.8, 1.1481) node[left]{$\Gamma_D$};
			\end{tikzpicture}
			\caption{Trimmed mapped L-shaped domain $\Omega_\mathbf F$ and mapped original mesh (in gray).} \label{fig:curvedLshapegeom}
		\end{center}
	\end{subfigure}
	~ 
	\begin{subfigure}[t]{0.48\textwidth}
		\begin{center}
			\begin{tikzpicture}[]
			\pgftext{\includegraphics[scale=0.09]{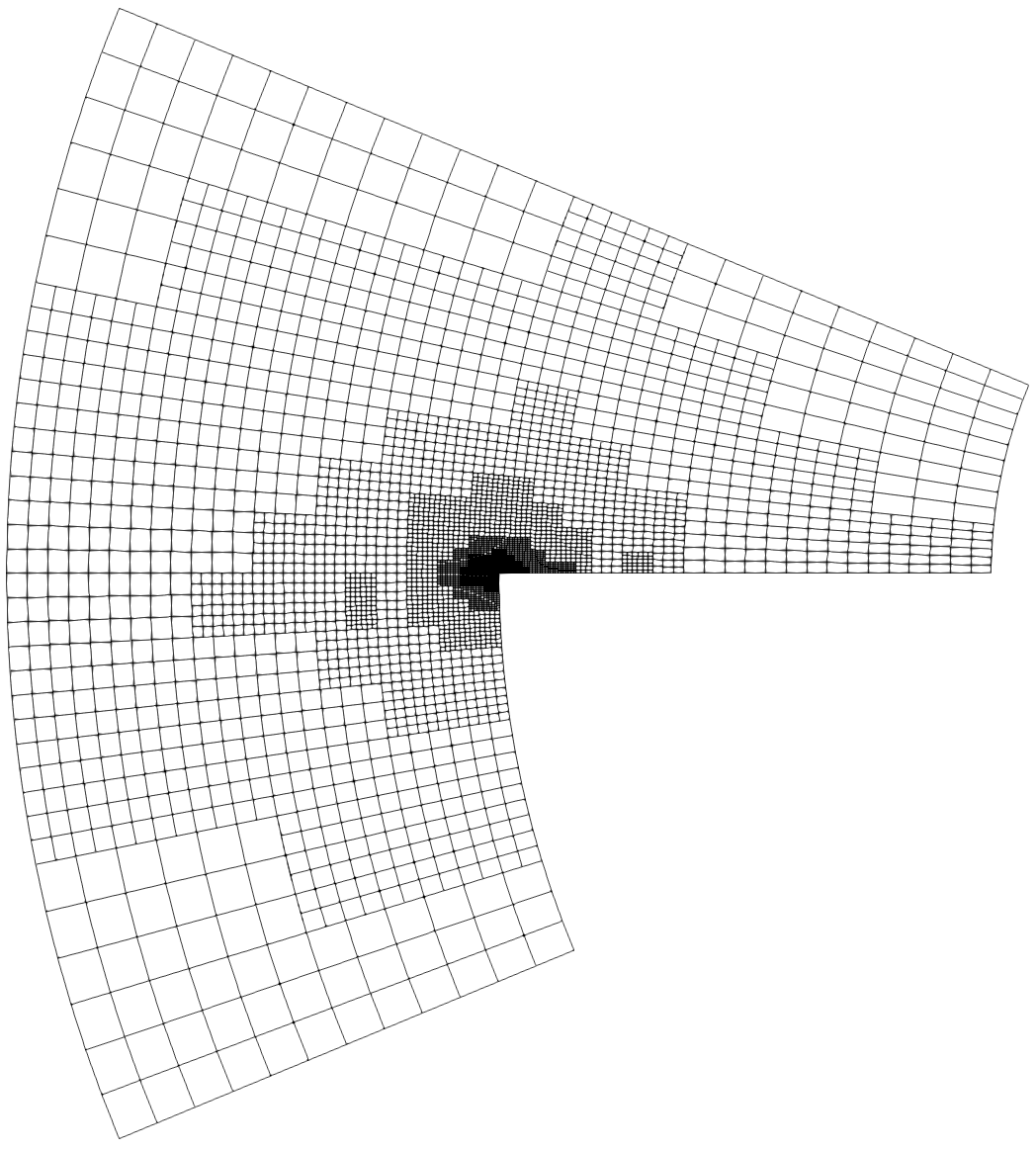}} at (0pt,0pt);
			\end{tikzpicture}
			\caption{Final active mesh on $\Omega_\mathbf F$ obtained with adaptive mesh refinement.} \label{fig:mappedLshapefinalmesh}
		\end{center}
	\end{subfigure}
	\caption{\review{Parametric and mapped L-shaped domains, with their corresponding final active meshes obtained with adaptive mesh refinement.}} \label{fig:Lshapegeommesh}
\end{figure}

\begin{figure}[h!]
	\begin{subfigure}[tb]{0.62\textwidth}
		\begin{center}
			\hspace*{-2em}\begin{tikzpicture}[scale=0.77]
			\begin{axis}[xmode=log, ymode=log, xlabel=$N_\text{dof}$, ylabel=Error/Estimator, 
			legend style={at={(1.5,1)},anchor=north east,draw=none,row sep=0.15cm},
			grid, grid style={dotted}, xmin=9,xmax=20000,ymax=2.5,ymin=0.001]
			\addlegendimage{black, thick, densely dashed};
			\addlegendimage{black, thick};
			\addlegendimage{mark=o, c1, thick, mark options={scale=1.4}};
			\addlegendimage{mark=square, c2, thick, mark options={scale=1.5}};
			\addlegendimage{mark=triangle, c3, thick, mark options={scale=1.5}};
			\addplot[mark=o, c1, thick, densely dashed, mark options={solid,scale=1.4}] table [x=ndofs, y=est, col sep=comma] {data/Lshape_est2_p2_eps3_GR.csv};
			\addplot[mark=o, c1, thick, mark options={solid,scale=1.4}] table [x=ndofs, y=err, col sep=comma] {data/Lshape_est2_p2_eps3_GR.csv};
			\addplot[mark=square, c2, thick, densely dashed, mark options={solid,scale=1.5}] table [x=ndofs, y=est, col sep=comma] {data/Lshape_est2_p2_eps4_GR.csv};
			\addplot[mark=square, c2, thick, mark options={solid,scale=1.5}] table [x=ndofs, y=err, col sep=comma] {data/Lshape_est2_p2_eps4_GR.csv};
			\addplot[mark=triangle, c3, thick, densely dashed, mark options={solid,scale=1.5}] table [x=ndofs, y=est, col sep=comma] {data/Lshape_est2_p2_eps5_GR.csv};
			\addplot[mark=triangle, c3, thick, mark options={solid,scale=1.5}] table [x=ndofs, y=err, col sep=comma] {data/Lshape_est2_p2_eps5_GR.csv};
			\logLogSlopeReverseTriangle{0.8}{0.1}{0.35}{0.33}{black}
			\legend{$\mathscr E\big(u_h^\varepsilon\big)$, $\left|u-u_h^\varepsilon\right|_{1,\Omega}$, $\varepsilon = 10^{-5}$, $\varepsilon = 10^{-6}$, $\varepsilon = 10^{-7}$};
			\end{axis}
			\end{tikzpicture}
			\caption{Convergence of the error and estimator\\ under uniform mesh refinement in the para-\\metric L-shaped domain $\Omega$.} \label{fig:cvLshapeGR}
		\end{center}
	\end{subfigure}
	~ 
	\begin{subfigure}[tb]{0.38\textwidth}
		\begin{center}
			\hspace*{-2em}\begin{tikzpicture}[scale=0.77]
			\begin{axis}[xmode=log, ymode=log, xlabel=$N_\text{dof}$, ylabel=Error/Estimator, 
			ylabel near ticks, yticklabel pos=right, grid, grid style={dotted},xmin=9,xmax=20000,ymax=2.5,ymin=0.001]
			\addplot[mark=o, c1, thick,  densely dashed, mark options={solid,scale=1.4}] table [x=ndofs, y=est, col sep=comma] {data/Lshape_est2_p2_eps3_GERS01_Tadm.csv};
			\addplot[mark=o, c1, thick, mark options={solid,scale=1.4}] table [x=ndofs, y=err, col sep=comma] {data/Lshape_est2_p2_eps3_GERS01_Tadm.csv};
			\addplot[mark=square, c2, thick,  densely dashed, mark options={solid,scale=1.5}] table [x=ndofs, y=est, col sep=comma] {data/Lshape_est2_p2_eps4_GERS01_Tadm.csv};
			\addplot[mark=square, c2, thick, mark options={solid,scale=1.5}] table [x=ndofs, y=err, col sep=comma] {data/Lshape_est2_p2_eps4_GERS01_Tadm.csv};
			\addplot[mark=triangle, c3, thick,  densely dashed, mark options={solid,scale=1.5}] table [x=ndofs, y=est, col sep=comma] {data/Lshape_est2_p2_eps5_GERS01_Tadm.csv};
			\addplot[mark=triangle, c3, thick, mark options={solid,scale=1.5}] table [x=ndofs, y=err, col sep=comma] {data/Lshape_est2_p2_eps5_GERS01_Tadm.csv};
			\logLogSlopeReverseTriangle{0.52}{0.1}{0.12}{1}{black}
			\end{axis}
			\end{tikzpicture}
			\caption{Convergence of the error and estimator under adaptive mesh refinement in the parametric L-shaped domain $\Omega$.} \label{fig:cvLshapeGERS}
		\end{center}
	\end{subfigure}
	~ 
	\begin{subfigure}[tb]{0.62\textwidth}
		\begin{center}
			\hspace*{-0.5em}\begin{tikzpicture}[scale=0.77]
			\begin{axis}[xmode=log, ymode=log, xlabel=$N_\text{dof}$, ylabel=Error/Estimator, 
			legend style={at={(1.5,1)},anchor=north east,draw=none,row sep=0.15cm},
			grid, grid style={dotted}, xmin=9,xmax=20000,ymax=2.5,ymin=0.001]
			\addlegendimage{black, thick, densely dashed};
			\addlegendimage{black, thick};
			\addlegendimage{mark=o, c1, thick, mark options={scale=1.4}};
			\addlegendimage{mark=square, c2, thick, mark options={scale=1.5}};
			\addlegendimage{mark=triangle, c3, thick, mark options={scale=1.5}};
			\addplot[mark=o, c1, thick, densely dashed, mark options={solid,scale=1.4}] table [x=ndofs, y=est, col sep=comma] {data/curvedLshape_p3_eps3_GR.csv};
			\addplot[mark=o, c1, thick, mark options={solid,scale=1.4}] table [x=ndofs, y=err, col sep=comma] {data/curvedLshape_p3_eps3_GR.csv};
			\addplot[mark=square, c2, thick, densely dashed, mark options={solid,scale=1.5}] table [x=ndofs, y=est, col sep=comma] {data/curvedLshape_p3_eps4_GR.csv};
			\addplot[mark=square, c2, thick, mark options={solid,scale=1.5}] table [x=ndofs, y=err, col sep=comma] {data/curvedLshape_p3_eps4_GR.csv};
			\addplot[mark=triangle, c3, thick, densely dashed, mark options={solid,scale=1.5}] table [x=ndofs, y=est, col sep=comma] {data/curvedLshape_p3_eps5_GR.csv};
			\addplot[mark=triangle, c3, thick, mark options={solid,scale=1.5}] table [x=ndofs, y=err, col sep=comma] {data/curvedLshape_p3_eps5_GR.csv};
			\logLogSlopeReverseTriangle{0.8}{0.1}{0.35}{0.33}{black}
			\legend{$\mathscr E\big(u_h^\varepsilon\big)$, $\left|u-u_h^\varepsilon\right|_{1,\Omega_\mathbf F}$, $\varepsilon = 10^{-5}$, $\varepsilon = 10^{-6}$, $\varepsilon = 10^{-7}$};
			\end{axis}
			\end{tikzpicture}
			\caption{Convergence of the error and estimator \\under uniform mesh refinement in the mapped\\ L-shaped domain $\Omega_\mathbf F$.} \label{fig:curvedcvLshapeGR}
		\end{center}
	\end{subfigure}
	~ 
	\begin{subfigure}[tb]{0.38\textwidth}
		\begin{center}
			\hspace*{-2em}\begin{tikzpicture}[scale=0.77]
			\begin{axis}[xmode=log, ymode=log, xlabel=$N_\text{dof}$, ylabel=Error/Estimator, 
			ylabel near ticks, yticklabel pos=right, grid, grid style={dotted},xmin=9,xmax=20000,ymax=2.5,ymin=0.001]
			\addplot[mark=o, c1, thick,  densely dashed, mark options={solid,scale=1.4}] table [x=ndofs, y=est, col sep=comma] {data/curvedLshape_p3_eps3_GERS01_Tadm.csv};
			\addplot[mark=o, c1, thick, mark options={solid,scale=1.4}] table [x=ndofs, y=err, col sep=comma] {data/curvedLshape_p3_eps3_GERS01_Tadm.csv};
			\addplot[mark=square, c2, thick,  densely dashed, mark options={solid,scale=1.5}] table [x=ndofs, y=est, col sep=comma] {data/curvedLshape_p3_eps4_GERS01_Tadm.csv};
			\addplot[mark=square, c2, thick, mark options={solid,scale=1.5}] table [x=ndofs, y=err, col sep=comma] {data/curvedLshape_p3_eps4_GERS01_Tadm.csv};
			\addplot[mark=triangle, c3, thick,  densely dashed, mark options={solid,scale=1.5}] table [x=ndofs, y=est, col sep=comma] {data/curvedLshape_p3_eps5_GERS01_Tadm.csv};
			\addplot[mark=triangle, c3, thick, mark options={solid,scale=1.5}] table [x=ndofs, y=err, col sep=comma] {data/curvedLshape_p3_eps5_GERS01_Tadm.csv};
			\logLogSlopeReverseTriangle{0.4}{0.1}{0.17}{1.5}{black}
			\end{axis}
			\end{tikzpicture}
			\caption{Convergence of the error and estimator under adaptive mesh refinement in the mapped L-shaped domain $\Omega_\mathbf F$.} \label{fig:curvedcvLshapeGERS}
		\end{center}
	\end{subfigure}
	\caption{\review{Convergence of the energy error and estimator with respect to the number of degrees of freedom in the parametric and mapped L-shaped domains, under uniform and adaptive mesh refinements.}} \label{fig:Lshapecv}
\end{figure}
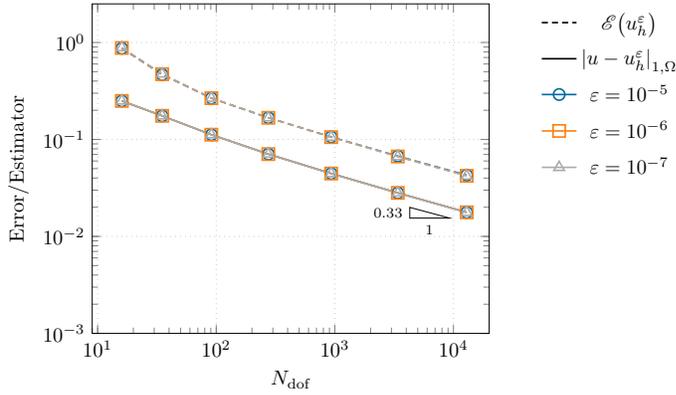
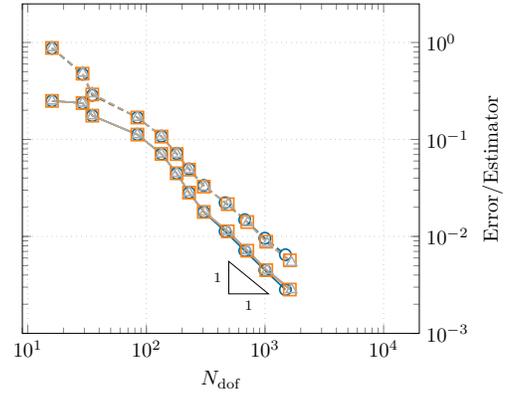
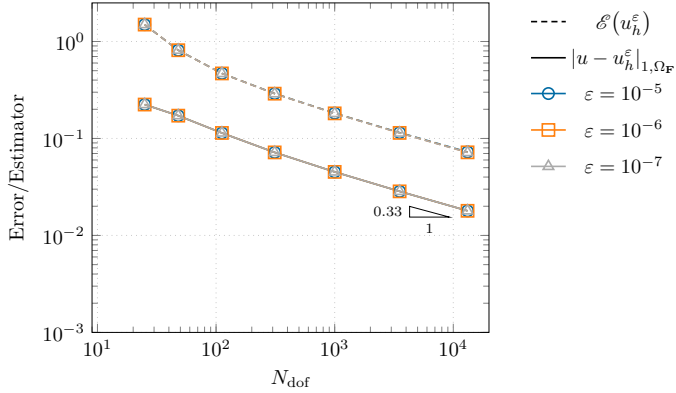
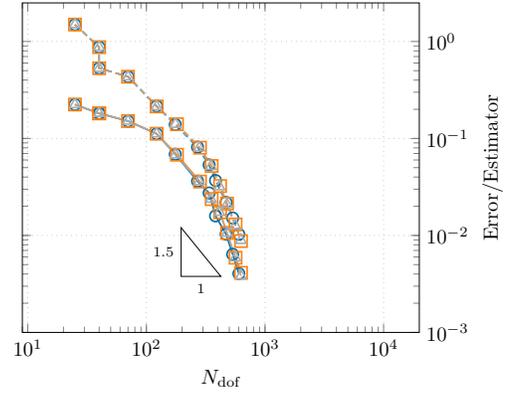

At the first iteration of the adaptive process, the non-trimmed geometry $\Omega_0$ is meshed with $4$ elements in each direction such that the internal knot lines are the lines defined by $x = \displaystyle\frac{k}{4}-\varepsilon$ and $y = \displaystyle\frac{k}{4}+\varepsilon$ for $k=1,2,3$ and $\varepsilon > 0$ small. In this way, the active part of the trimmed mesh elements are very thin and of measure proportional to $\varepsilon h$, as shown in Figure \ref{fig:Lshapegeom}. We solve Galerkin problem (\ref{eq:discrpb}) with HB-splines of degree $2$ in each direction, with $\varepsilon = 10^{-q}$ for $q=5,6,7$, and we denote $u_h^\varepsilon$ the obtained discrete solution. Note that the dependence on $\varepsilon$ is only on the mesh and not on the geometry $\Omega$ itself. As in the previous numerical experiment, both uniform refinement and the adaptive refinement described in Section \ref{sec:adapt} with $\theta = 0.9$ are performed, and the algorithm stops when the number of degrees of freedom of the HB-spline space exceeds $10^4$ or when the number of hierarchical levels exceeds $12$. The results are given in Figure \ref{fig:cvLshapeGR} and \ref{fig:cvLshapeGERS}, respectively. 

As in the previous example, under both uniform and adaptive refinement, the estimator follows well the behavior of the energy error in the trimmed geometry, and no dependence on $\varepsilon$ is observed. Indeed, for the different values of $\varepsilon$, the corresponding curves are almost superposed. Moreover, the chosen differential problem presents a singularity at the corner $(0.5,0.5)$. In particular, it can be shown that the exact solution $u\in H^{\beta-\delta}(\Omega)$ with $\beta = \displaystyle\frac{5}{3}$, for every $\delta > 0$. Therefore, one expects a convergence rate of $\mathcal O\left(h^\frac{2}{3}\right) = \mathcal O\left(N_\text{dof}^{-\frac{1}{3}}\right)$ for uniform $h$-refinement. This is indeed what is observed both for the estimator and the error, with a small effectivity index nearly equal to $2.4$ and independent \review{of} $\varepsilon$. As it is now standard in adaptivity, we can see in Figure \ref{fig:cvLshapeGERS} that the optimal asymptotic convergence rate of $\mathcal O\left(N_\text{dof}^{-\frac{p}{2}}\right) = \mathcal O\left(N_\text{dof}^{-1}\right)$ is recovered when the mesh is adaptively refined. In this case, the effectivity index is very small, being nearly equal to $1.7$ once the asymptotic regime is attained, and it is still independent \review{of} $\varepsilon$. The adaptive strategy exhibits a substantial increase in accuracy with respect to the number of degrees of freedom, in comparison with the uniform refinement strategy. 

Therefore, this test validates the theory presented in Section \ref{sec:estimator}, and in particular, it validates the fact that the estimator is independent \review{of} the type of cut from which the trimmed geometry is obtained. 

\subsection{Singular solution in a mapped trimmed domain}
In this last numerical experiment, we consider the geometry $\Omega_\mathbf F$ obtained from the geometry $\Omega$ of the previous example, mapped with an isogeometric mapping $\mathbf F$ (see Section \ref{ss:hbs}). This numerical test was presented in \cite{bracco}. More precisely and as represented in Figure \ref{fig:curvedLshapegeom}, $\mathbf F(\Omega_0)$ is the surface ruled between the arcs centered in $(2,0)$ of radius $3$ and $1$ whose angle spans between $\displaystyle\frac{7\pi}{8}$ and $\displaystyle\frac{9\pi}{8}$. The curve that trims $\mathbf F(\Omega_0)$ to obtain $\Omega_\mathbf F$ is the image of the trimming curve defining $\Omega$ in the parametric domain, that is, it is the line $\big((0,1)\times \{0\} \big) \cup \mathcal C$, with $\mathcal C$ being the arc centered in $(2,0)$ of radius $2$ whose angle spans between $\pi$ and $\displaystyle\frac{9\pi}{8}$. Therefore, $\Omega_\mathbf F$ presents a re-entrant corner of angle $\displaystyle\frac{\pi}{2}$ in $(0,0)$.
We choose the data $f$, $g_D$ and $g_N$ such that the exact solution of Laplace problem (\ref{eq:originalpb}) is $u(r,\varphi) = r^\frac{2}{3}\sin\left(\displaystyle\frac{2\varphi}{3}\right)$ in the polar coordinate system $(\mathbf e_r, \mathbf e_\varphi)$ centered in $(0,0)$, as represented in Figure \ref{fig:curvedLshapegeom}, with $\Gamma_D := \mathbf F\big( (0,1)\times \{1\} \big) \cup \mathbf F\big( \{0\}\times (0,1)\big)$ and $\Gamma_N := \partial \Omega \setminus \overline{\Gamma_D}$. 

At the first iteration of the adaptive process, the considered mesh is the same as in the previous parametric L-shaped example, so that the active part of the trimmed mesh elements are very thin and of measure proportional to $\varepsilon h$. We solve Galerkin problem (\ref{eq:discrpb}) with HB-splines of degree $3$ in each direction, with $\varepsilon = 10^{-q}$ for $q=5,6,7$, and we denote $u_h^\varepsilon$ the obtained discrete solution. Again, note that the dependence on $\varepsilon$ is only on the mesh and not on the geometry $\Omega$ itself. As in the previous numerical experiment, both uniform refinement and the adaptive refinement described in Section \ref{sec:adapt} with $\theta = 0.9$ are performed, and the algorithm stops when the number of degrees of freedom of the HB-spline space exceeds $10^4$ or when the number of hierarchical levels exceeds $12$. The results are given in Figure \ref{fig:curvedcvLshapeGR} and \ref{fig:curvedcvLshapeGERS}, respectively. 

As expected, the results are very similar to the ones of the previous experiment. Indeed, under both uniform and adaptive refinement, the estimator follows well the behavior of the energy error in the trimmed mapped geometry, and no dependence on $\varepsilon$ is observed. Moreover, the considered differential problem presenting a corner singularity in $(0,0)$, as previously, one expects a convergence rate of $\mathcal O\left(\review{N_\text{dof}^{-\frac{1}{3}}}\right)$ for uniform refinement, while one expects to recover the optimal asymptotic convergence rate of $\mathcal O\left(N_\text{dof}^{-\frac{p}{2}}\right) = \mathcal O\left(N_\text{dof}^{-\frac{3}{2}}\right)$ for adaptive refinement. This is indeed the case for the uniform refinement, and a small effectivity index nearly equal to $4.1$ and independent \review{of} $\varepsilon$ is obtained. However, a faster convergence than the expected one is observed for the adaptive refinement, both for the error and the estimator: this is a common behavior when the asymptotic regime is not reached yet. Also in this case, a small effectivity index is obtained, being nearly equal to $2.3$ and independent \review{of} $\varepsilon$.

\section{Conclusions} \label{sec:ccl}
We have introduced a novel residual \textit{a posteriori} estimator of the energy error coming from the numerical approximation of \review{the} Poisson problem on a trimmed geometry of arbitrary dimension, using hierarchical B-spline based isogeometric analysis. We have demonstrated its reliability \review{independently of the trimming boundary}, and we have tested it on an extensive set of numerical experiments: in all of them, we have observed that the proposed estimator acts as an excellent approximation of the true error. \review{We have} numerically verified that the effectivity index of the estimator is also independent \review{of} the type of cut, and in particular, it is independent \review{of} the measure of the active part of the trimmed elements. 

Moreover, the proposed estimator being naturally decomposed into local element contributions, we have introduced an adaptive mesh refinement strategy on trimmed geometries, driven by this estimator. This method strongly relies on the adaptive strategies introduced for non-trimmed geometries, that also require an admissibility assumption of the underlying mesh. We have performed different numerical experiments which exhibit both smooth and singular solutions, where optimal asymptotic rates of convergence are recovered. Compared to uniform refinement, the adaptive strategy exhibits a substantial increase in accuracy with respect to the number of degrees of freedom, as it has already been observed in the literature for non-trimmed geometries. 

\renewcommand{\thesection}{A}
\section{Appendix}\label{sec:appendix}
\begin{mylemma} \label{lemma:savare}
	For all $q\in [2,\infty)$ and all $v\in H^\frac{1}{2}(0,2\pi)$, 
	$$\|v\|_{L^q(0,2\pi)} \leq c \sqrt{q} \|v\|_{\frac{1}{2}, (0,2\pi)},$$
	where $c$ is a constant independent from $q$. 
\end{mylemma}
\begin{proof}
	See \cite[Lemma~5.1]{benbelgacembuffamaday}.
\end{proof}

\begin{mylemma}\label{lemma:LqH1}
	For all $q\in [2,\infty)$ and all $v\in H^1\left((0,2\pi)^2\right)$, 
	$$\|v\|_{L^q\left((0,2\pi)^2\right)} \leq c \sqrt{q} \|v\|_{1, (0,2\pi)^2},$$
	where $c$ is a constant independent from $q$. 
\end{mylemma}
\begin{proof}
	The proof follows \cite[Lemma~5.1]{benbelgacembuffamaday}, but in the two-dimensional case.
	In order to use the Fourier decomposition, consider the Hilbert basis of $L^2\left((0,2\pi)^2\right)$ defined by $\varphi_{\mathbf k}(x) := \varphi_{k_1}(x_1)\varphi_{k_2}(x_2)$ for all $\mathbf k = (k_1,k_2) \in \mathbb N^2$, $x=(x_1,x_2)\in (0,2\pi)^2$, where for $i=1,2$, $\varphi_0(x_i) = \frac{1}{\sqrt{2\pi}}$ and $\varphi_k(x_i) = \frac{1}{\sqrt{\pi}}\cos(kx_i)$, $k\in\mathbb N\setminus\{0\}$. 
	Let $v \in H^1\left((0,2\pi)^2\right)$ be decomposed as $v = \displaystyle\sum_{\mathbf k\in\mathbb N^2}v_{\mathbf k} \varphi_{\mathbf k}$. Then 
	\begin{equation} \label{eq:H1fourier}
		\|v\|_{1,(0,2\pi)^2} = \left[\sum_{\mathbf k\in\mathbb N^2} (1+k_1^2+k_2^2) v_{\mathbf k}^2 \right]^\frac{1}{2}.
	\end{equation}
	Now, for every $q\in[2,\infty)$, let $q'\in (1,2]$ such that $\displaystyle\frac{1}{q}+\frac{1}{q'} = 1$. Then by the Hausdorff-Young inequality, there exists a constant $C$ independent from $q$ such that
	\begin{equation*}
		\|v\|_{L^q\left((0,2\pi)^2\right)} \leq C \left( \sum_{\mathbf k\in\mathbb N^2} \left|v_\mathbf k\right|^{q'} \right)^\frac{1}{q'} = C \left( \sum_{\mathbf k\in\mathbb N^2} \left(1+k_1^2+k_2^2\right)^{-\frac{q'}{2}} \left(1+k_1^2+k_2^2\right)^\frac{q'}{2} \left|v_\mathbf k\right|^{q'} \right)^\frac{1}{q'}.
	\end{equation*}
	Thus if we let $r\in (2,\infty]$ such that $\displaystyle\frac{1}{r}+\frac{q'}{2} = 1$, by H\"older inequality and using (\ref{eq:H1fourier}), 
	\begin{align*}
		\|v\|_{L^q\left((0,2\pi)^2\right)} &\leq C \left[ \sum_{\mathbf k\in\mathbb N^2}\left(1+k_1^2+k_2^2\right)^{-\frac{q'r}{2}} \right]^\frac{1}{q'r} \left[ \sum_{\mathbf k\in\mathbb N^2}\left(1+k_1^2+k_2^2\right) v_\mathbf k^2 \right]^\frac{1}{2} \\
		&= C \left[ \sum_{\mathbf k\in\mathbb N^2}\left(1+k_1^2+k_2^2\right)^{-\frac{q}{q-2}} \right]^\frac{q-2}{2q} \|v\|_{1,(0,2\pi)^2}.
	\end{align*}
	One can conclude the proof by observing that the Riemann serial is bounded by $\sqrt q$. 
\end{proof}

\section*{Acknowledgment}
The authors gratefully acknowledge the support of the European Research Council, via the ERC AdG project CHANGE n.694515. 
R. V\'azquez also thanks the support of the Swiss National Science Foundation via the project HOGAEMS n.200021\_188589.

\addcontentsline{toc}{chapter}{Bibliography}
\bibliography{estimator_trimming}
\bibliographystyle{ieeetr}

\end{document}